\newtheorem{theorem}{Theorem}
\newtheorem{lemma}{Lemma}
\newtheorem{cor}[theorem]{Corollary}
\newtheorem{prop}[theorem]{Proposition}
\newtheorem*{theoremNoNum}{Theorem}
\newtheorem*{conjNoNum}{Conjecture}
\theoremstyle{definition}
\newtheorem{definition}{Definition}
\theoremstyle{remark}
\begin{document}
\newcommand\mylabel[1]{\label{#1}}
\newcommand{\beqs}{\begin{equation*}}
\newcommand{\eeqs}{\end{equation*}}
\newcommand{\beq}{\begin{equation}}
\newcommand{\eeq}{\end{equation}}
\newcommand\eqn[1]{(\ref{eq:#1})}
\newcommand\exam[1]{\ref{exam:#1}}
\newcommand\thm[1]{\ref{thm:#1}}
\newcommand\lem[1]{\ref{lem:#1}}
\newcommand\propo[1]{\ref{propo:#1}}
\newcommand\corol[1]{\ref{cor:#1}}
\newcommand\sect[1]{\ref{sec:#1}}
\newcommand\subsect[1]{\ref{subsec:#1}}

\newcommand{\Z}{\mathbb Z}
\newcommand{\N}{\mathbb N}
\newcommand{\R}{\mathbb R}
\newcommand{\C}{\mathbb C}
\newcommand{\Q}{\mathbb Q}
\newcommand{\cA}{\mathcal A}
\newcommand{\cC}{\mathcal C}
\newcommand{\even}{\mathrm{even}}
\newcommand{\odd}{\mathrm{odd}}

\title[Inverse of a matrix related to double zeta values of odd weight]
{Inverse of a matrix related to double zeta values\\ of odd weight}

\author{Ding Ma}
\address{Department of Mathematics, University of Arizona, Tucson, AZ. 85721}
\email{martin@math.arizona.edu}

\date{\today}

\begin{abstract}
In this paper, we give a proof of a conjecture made by Zagier in \cite{Zag1} about the inverse of some matrix related to double zeta values of parity $(\even,\odd)$. As a result, we obtain a family of Bernoulli number identities. We further generalize this family to a more general setting involving binomial coefficients of negative arguments.
\end{abstract}

\maketitle

\section{Introduction and main result}\label{sc1}

Multiple zeta values are real numbers, first defined by Euler in $1700$s, that have been much studied in recent years because of their many surprising properties. They appear in many places in both mathematics and mathematical physics, from periods of mixed Tate motives to values of Feynman integrals. There are many conjectures concerning the values of these numbers. 

We first give the definition and formulations of some open problems concerning multiple zeta values (MZVs) that we will prove here. For positive integers $k_1,\ldots,k_n$ with $k_n\geq 2$, we define the MZV $\zeta(k_1,\ldots,k_n)$ as the iterated multiple sum
\begin{equation}
\zeta(k_1,\ldots,k_n)=\sum_{0<m_1<\cdots<m_n}\frac{1}{m_1^{k_1}\cdots m_n^{k_n}}.
\end{equation}
The constant $n$ is called the depth of this MZV, and $k=k_1+\cdots+k_n$ is its weight. It was already found by Euler (explicitly for $k$ up to 13) that all double zeta values (MZVs of depth $2$) of odd weight are rational linear combinations of products of ``Riemann'' zeta values. In \cite{Zag1}, Zagier gave the following explicit expression for double zeta values of weight $k=2K+1$ and parity $(\even,\odd)$
\begin{equation}\label{ep}
\zeta(2r,k-2r)\equiv \sum_{s=1}^{K-1}\left[{2K-2s\choose 2r-1}+{2K-2s\choose 2K-2r}\right]\zeta(2s)\zeta(k-2s)\quad(1\leq r\leq K-1),
\end{equation}
where the congruence is modulo $\Q\zeta(k)$. He further defined a matrix $\cA=\cA_K$ to be the $(K-1)\times(K-1)$ matrix whose $(r,s)$-entry is the expression in square brackets in (\ref{ep}), and made the following conjecture about the entries of $\cA^{-1}$.

\begin{conjNoNum} For any odd integer $k=2K+1\geq 5$ and the matrix $\cA=\cA_K$ defined above, we have
\begin{eqnarray*}
\big(\cA^{-1}\big)_{s,r}&\stackrel{?}{=}&\frac{-2}{2s-1}\sum_{n=0}^{k-2s}{k-2r-1\choose k-2s-n}{n+2s-2\choose n}B_n\\
&\stackrel{?}{=}&\frac{2}{2s-1}\sum_{n=0}^{k-2s}{2r-1\choose k-2s-n}{n+2s-2\choose n}B_n\qquad(1\leq s,r\leq K-1),
\end{eqnarray*}
where $B_n$ denotes the $n$th Bernoulli number. 
\end{conjNoNum}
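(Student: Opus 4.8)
The plan is to verify directly that the matrix $P$ whose entries are given by the conjectured formula satisfies $\cA P=I_{K-1}$; since $\cA$ and $P$ are square of the same size, this one-sided identity already gives $P=\cA^{-1}$. I would first record, as a preliminary, that the two displayed expressions for $\big(\cA^{-1}\big)_{s,r}$ agree: their difference is a multiple of $\sum_{n}\big[\binom{k-2r-1}{k-2s-n}+\binom{2r-1}{k-2s-n}\big]\binom{n+2s-2}{n}B_n$, and I would show this vanishes. Numerically it does (the contributions at $n=0,1,2,\dots$ cancel in pairs up to the $B_1$ term), and I expect it to fall out of the same generating-function identity used below, so I would treat it as a lemma proved by the identical method rather than as an independent fact.

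For the main identity I would expand $(\cA P)_{r,t}=\sum_{u=1}^{K-1}\big[\binom{2K-2u}{2r-1}+\binom{2K-2u}{2K-2r}\big]\,\frac{-2}{2u-1}\sum_{n=0}^{k-2u}\binom{2K-2t}{k-2u-n}\binom{n+2u-2}{n}B_n$ and interchange the order of summation so as to perform the matrix-index sum over $u$ first. The device is to linearize the $u$-dependence by writing each $u$-dependent binomial as a coefficient extraction, using $\binom{2K-2u}{2r-1}=[x^{2r-1}](1+x)^{2K-2u}$, $\binom{2K-2t}{k-2u-n}=[y^{k-2u-n}](1+y)^{2K-2t}$ and $\binom{n+2u-2}{n}=[z^{n}](1-z)^{-(2u-1)}$, and to absorb the rational weight through $\frac{1}{2u-1}\binom{n+2u-2}{n}=\frac{1}{n+2u-1}\binom{n+2u-1}{2u-1}$. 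After this substitution the sum over $u$ is a near-geometric series—the factor $\tfrac1{2u-1}$ producing a logarithmic (arctanh-type) contribution—and collapses to a single residue. What remains is a convolution of binomial coefficients against the Bernoulli numbers, which I would evaluate with the generating function $\frac{t}{e^{t}-1}=\sum_{n\ge0}B_n\frac{t^n}{n!}$; the Kronecker delta $\delta_{r,t}$ should then appear as a constant term, with all off-diagonal contributions cancelling.

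A cleaner organizing principle—one that also explains the generalization to binomial coefficients of negative argument promised in the abstract—is to prove everything as an identity of rational functions in indeterminates standing for $2r-1$, $2s-2$, $2K$, and so on, in which the Bernoulli numbers enter only through the fixed series $\frac{t}{e^{t}-1}$. The arithmetic input is then confined to the ranges of summation and to the vanishing of $B_n$ for odd $n\ge3$, and the integer statement follows by specialization. I would try to phrase the two lemmas above in this form from the outset.

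I expect the inner sum over the matrix index $u$ to be the main obstacle. The variable $u$ occurs simultaneously in two binomial numerators, in the denominator $2u-1$, and inside both the upper limit and the lower binomial of the Bernoulli sum, so no single named summation (Vandermonde, Pfaff–Saalschütz) applies verbatim; the likely route is a partial-fraction or residue computation. The real content is getting the even/odd combination $\binom{2K-2u}{2r-1}+\binom{2K-2u}{2K-2r}$—which is exactly the period-polynomial-type symmetry built into (\ref{ep})—to conspire with the weight $\frac1{2u-1}$ so that the residual Bernoulli sum telescopes to $\delta_{r,t}$. Secondary care is needed at the boundary terms $n=0,1$, where $B_1=-\tfrac12$ breaks the parity symmetry exploited elsewhere.
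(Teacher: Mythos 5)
Your overall framing is sound: verifying the one-sided identity $\cA P=I_{K-1}$ does suffice, and the equality of the two conjectured expressions is indeed a separate fact that must be established (it amounts to the vanishing of $\sum_n\bigl[\binom{k-2r-1}{k-2s-n}+\binom{2r-1}{k-2s-n}\bigr]\binom{n+2s-2}{n}B_n$). But as written, the proposal has two genuine gaps, and they are precisely the two places where all the mathematical content lives. First, the agreement of the two expressions is dismissed with ``numerically it does'' and ``I expect it to fall out of the same generating-function identity.'' In the paper this is Lemma \lem{FeG} (equivalently Corollary \corol{r3}), and it is not a routine cancellation: its proof requires Carlitz's symmetric Bernoulli identity $(-1)^m\sum_{j=0}^m\binom{m}{j}B_{n+j}=(-1)^n\sum_{j=0}^n\binom{n}{j}B_{m+j}$ together with an induction on the weight $k$ driven by a second-derivative recurrence; nothing in your outline produces this. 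Second, and more seriously, the evaluation of $(\cA P)_{r,t}$ is never carried out. You reduce it to a triple sum, propose coefficient-extraction representations for the binomials, and then assert that the sum over $u$ ``collapses to a single residue'' and that the Kronecker delta ``should then appear.'' You yourself flag this sum as the main obstacle, note that no standard summation theorem applies, and that the $\frac{1}{2u-1}$ weight produces an arctanh-type (non-rational) generating function; there is no demonstration that the proposed partial-fraction or residue computation closes. A plan whose central step is an unevaluated sum, accompanied by the hope that it telescopes, is not a proof.

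For contrast, the paper avoids this direct summation entirely by a structural observation your proposal lacks: encoding a row vector $v$ as the polynomial $V(x,y)=\sum_s v_s x^{2K-2s}y^{2s-1}$, right-multiplication by $\cC=\cA^T$ becomes extraction of certain coefficients of $V(x+y,x)+V(x+y,y)$. The whole inversion then reduces to the closed-form polynomial identity $F_k^{(i)}(x+y,x)+F_k^{(i)}(x+y,y)=R_k^{(i)}(x,y)$, where $R_k^{(i)}$ has only four monomials, and this identity is proved by induction on $k$: the operator $\partial^2/\partial y^2$ maps the weight-$k$ polynomials to linear combinations of weight-$(k-2)$ ones (Lemmas \lem{dyF} and \lem{dyR}), and the two-dimensional kernel of the induction ($a_1x^{k-3}y+a_0x^{k-2}$) is killed by a symmetry argument plus the vanishing lemma \lem{Ae0}. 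If you want to rescue your approach, you would need either to actually evaluate your $u$-sum in closed form---which, given the simultaneous appearance of $u$ in two binomial upper arguments, the denominator $2u-1$, and the summation limits, is not plausibly a one-line residue computation---or to adopt some device, like the paper's polynomial encoding, that sidesteps the summation altogether.
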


In order to match the notation in an ongoing project, I will consider the matrix $\cC=\cA^T$, and prove the following the result in this paper.
\begin{theorem}\label{thm:Cinv}
For any odd integer $k=2K+1\geq 5$ and the matrix $\cC=\cA_K^T$ defined above, we have
\begin{eqnarray} 
\big(\cC^{-1}\big)_{r,s}&=&\frac{-2}{2s-1}\sum_{n=0}^{k-2s}{k-2r-1\choose k-2s-n}{n+2s-2\choose n}B_n \label{eq:Cinv1}\\
&=&\frac{2}{2s-1}\sum_{n=0}^{k-2s}{2r-1\choose k-2s-n}{n+2s-2\choose n}B_n\qquad(1\leq s,r\leq K-1) \label{eq:Cinv2},
\end{eqnarray}
\end{theorem}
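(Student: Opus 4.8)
The plan is to verify the inverse directly, by checking that the matrix $B$ with entries $B_{r,s}$ given by the right-hand side of \eqref{eq:Cinv1} satisfies $\cC B = I$; since $\cC$ is square, a one-sided inverse suffices. I would first record the equivalence of \eqref{eq:Cinv1} and \eqref{eq:Cinv2} as a preliminary lemma, so as to be free to use whichever form is convenient on each side of a computation. Writing $\binom{n+2s-2}{n} = (-1)^n\binom{1-2s}{n}$, the two expressions differ only in the upper binomial argument $k-2r-1 = 2K-2r$ versus $2r-1$; their equality amounts to showing that $\sum_n\bigl[\binom{2K-2r}{k-2s-n}+\binom{2r-1}{k-2s-n}\bigr]\binom{n+2s-2}{n}B_n = 0$, which I expect to follow from the reflection $B_n(1-x) = (-1)^n B_n(x)$ together with the binomial reflection $\binom{A}{m} = \binom{A}{A-m}$ and the vanishing of $B_n$ for odd $n\geq 3$.

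For the product itself, using $k-2t-1 = 2K-2t$ I would substitute the entries and interchange the order of summation so as to perform the matrix index $t$ first:
\begin{equation*}
(\cC B)_{r,s} = \frac{-2}{2s-1}\sum_{n=0}^{k-2s}\binom{n+2s-2}{n}B_n \sum_{t}\Bigl[\binom{2K-2r}{2t-1}+\binom{2K-2r}{2K-2t}\Bigr]\binom{2K-2t}{k-2s-n}.
\end{equation*}
The inner $t$-sum is the heart of the matter. For the second piece, trinomial revision $\binom{2K-2r}{2K-2t}\binom{2K-2t}{c} = \binom{2K-2r}{c}\binom{2K-2r-c}{2K-2t-c}$ (with $c = k-2s-n$) pulls out the factor $\binom{2K-2r}{c}$ and leaves $\sum_t\binom{2K-2r-c}{2K-2t-c}$, a sum of binomial coefficients with fixed upper index $A = n+2s-2r-1$ over lower indices in an arithmetic progression of step $2$, which evaluates to a power of $2$. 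The first piece I would treat by coefficient extraction, writing the two binomials as coefficients of $(1+y)^{2K-2t}$ and $(1+z)^{2K-2r}$ and summing the geometric series in $t$, again producing a clean closed form after taking the appropriate even/odd part.

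Once the $t$-sum is evaluated, $(\cC B)_{r,s}$ collapses to a single sum over $n$ of the shape (Bernoulli number) $\times$ (binomial) $\times$ (power of $2$), and it remains to show this equals $\delta_{r,s}$. Here I would invoke the generating function $\tfrac{x}{e^x-1} = \sum_n B_n \tfrac{x^n}{n!}$, or equivalently the convolution identity $\sum_j \binom{m}{j} B_j = (-1)^m B_m$, to collapse the Bernoulli sum. I anticipate two obstacles. First, the matrix is only $(K-1)\times(K-1)$, so the $t$-sum is truncated; I must check that the binomials vanish outside the summation range, or else carefully account for the boundary contributions — this is precisely the point at which binomial coefficients of negative argument enter, matching the generalization promised in the abstract. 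Second, and more seriously, the final Bernoulli evaluation is the genuinely nonelementary step: reducing the weighted Bernoulli sum to $\delta_{r,s}$ should require the full strength of the Bernoulli recurrence rather than a purely combinatorial binomial identity, and I expect the bulk of the technical work to concentrate there.
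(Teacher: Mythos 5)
Your overall strategy---checking that the candidate matrix $B$ satisfies $\cC B=I$ and invoking squareness---is sound, and it is in fact the same logical skeleton as the paper's proof. But the proposal has a genuine gap at exactly the step you defer, and one of your intermediate claims is false. Concretely: the inner $t$-sum does \emph{not} collapse to a clean closed form. The second piece does, via trinomial revision, reduce to $\binom{2K-2r}{c}\,2^{n+2s-2r-2}$ up to boundary corrections (e.g.\ at $c=k-2s-n=0$ the $j=0$ term of the arithmetic progression is missing), but for the first piece coefficient extraction gives
\begin{equation*}
\sum_{t=1}^{K-1}\binom{2K-2r}{2t-1}\binom{2K-2t}{c}
=\frac12\sum_{j}\binom{2K-2r}{j}2^{2K-2r-j}\binom{2r-1}{c-j}-\frac12\binom{2r-1}{c-2K+2r},
\end{equation*}
a genuine convolution rather than a single power of $2$. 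Hence $(\cC B)_{r,s}$ remains a \emph{double} sum over $n$ and $j$ with $2$-power weights, and the identity $\sum_j\binom{m}{j}B_j=(-1)^mB_m$ is nowhere near strong enough to evaluate it as $\delta_{r,s}$; you yourself flag this final evaluation as the place where "the bulk of the technical work" lies, but that evaluation \emph{is} the theorem, and no argument for it is given. Your preliminary lemma is also underestimated: the equivalence of \eqn{Cinv1} and \eqn{Cinv2} is precisely Lemma \lem{FeG} (equivalently Corollary \corol{r3} with $i=2r-1$), which the paper proves by induction on $k$ using Carlitz's symmetric Bernoulli identity (Lemma \lem{Ca}), not merely the reflection $B_n(1-x)=(-1)^nB_n(x)$ and the vanishing of odd Bernoulli numbers; the sums involved are not values of Bernoulli polynomials, so your one-line derivation does not go through as stated.

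For contrast, the paper avoids the direct summation entirely: it encodes a row vector $v$ as the polynomial $V(x,y)=\sum_s v_sx^{2K-2s}y^{2s-1}$, observes that right multiplication by $\cC$ amounts to extracting suitable monomials of $V(x+y,x)+V(x+y,y)$, and thereby reduces the theorem to the polynomial identity $F_k^{(i)}(x+y,x)+F_k^{(i)}(x+y,y)=R_k^{(i)}(x,y)$ (Theorem \thm{FFR}), in which the Kronecker delta appears simply as the monomial $x^{2i-1}y^{2K-2i}$ of $R_k^{(2i-1)}$. That identity is then proved by induction on the weight $k$, using second-derivative recurrences (Lemmas \lem{dyF} and \lem{dyR}), the equality $F_k^{(i)}=G_k^{(i)}$ (Lemma \lem{FeG}), and the vanishing lemma $A_k^{(i)}=0$ (Lemma \lem{Ae0}) to pin down the two undetermined constants of integration. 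To complete your direct approach you would essentially need to rediscover this inductive structure, or find a genuinely new evaluation of the weighted double sum; as written, the proposal sets up the computation correctly but does not prove the theorem.
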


In particular, the first and the last rows of $\cC^{-1}$ consist of simple multiples of Bernoulli numbers as the next corollary says.

\begin{cor}\label{cor:r1}
For any odd integer $k=2K+1\geq 5$ and any $s$ satisfying $1\leq s\leq K-2$, we have
$$\big(\cC^{-1}\big)_{1,s}=-\frac{1}{2}\big(\cC^{-1}\big)_{K-1,s}=2{2K-2\choose 2s-1}\frac{B_{2K-2s}}{2K-2s}.$$
\end{cor}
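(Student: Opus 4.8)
The plan is to read off both rows directly from Theorem~\ref{thm:Cinv}, exploiting the fact that the theorem provides \emph{two} expressions for each entry and choosing, for each of the two rows in question, whichever one makes the top index of the outer binomial coefficient as small as possible. For the first row I would use \eqref{eq:Cinv2}, where the outer binomial is $\binom{2r-1}{k-2s-n}=\binom{1}{k-2s-n}$ when $r=1$; for the last row I would use \eqref{eq:Cinv1}, where the outer binomial is $\binom{k-2r-1}{k-2s-n}=\binom{2}{k-2s-n}$ when $r=K-1$. In each case only a bounded number of values of $n$ survive, and a parity argument then collapses the sum to a single term.

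Concretely, for $r=1$ the factor $\binom{1}{k-2s-n}$ forces $n\in\{k-2s-1,\,k-2s\}$. Writing $k-2s=2(K-s)+1$, the term $n=k-2s$ has odd index $2(K-s)+1\ge 5$ (using $s\le K-2$), so its Bernoulli number vanishes, leaving only $n=k-2s-1=2K-2s$. Substituting $n=2K-2s$ into $\tfrac{2}{2s-1}\binom{n+2s-2}{n}B_n$ gives $\tfrac{2}{2s-1}\binom{2K-2}{2K-2s}B_{2K-2s}$. The final step is the elementary identity $\tfrac{1}{2s-1}\binom{2K-2}{2s-2}=\tfrac{1}{2K-2s}\binom{2K-2}{2s-1}$ (clear by comparing factorials), together with $\binom{2K-2}{2K-2s}=\binom{2K-2}{2s-2}$, which rewrites this as $2\binom{2K-2}{2s-1}\tfrac{B_{2K-2s}}{2K-2s}$, the claimed value of $(\cC^{-1})_{1,s}$.

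For $r=K-1$ the factor $\binom{2}{k-2s-n}$ admits $n\in\{k-2s-2,\,k-2s-1,\,k-2s\}$. The two outer terms $n=k-2s=2(K-s)+1$ and $n=k-2s-2=2(K-s)-1$ are both odd, so their Bernoulli numbers vanish \emph{provided} each index is at least $3$; the constraint $2(K-s)-1\ge 3$ is exactly the hypothesis $s\le K-2$. Only the middle term $n=2K-2s$ then survives, now carrying the extra factor $\binom{2}{1}=2$ and the overall sign $-$ from \eqref{eq:Cinv1}, which yields $(\cC^{-1})_{K-1,s}=-\tfrac{4}{2s-1}\binom{2K-2}{2s-2}B_{2K-2s}=-2\,(\cC^{-1})_{1,s}$. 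Rearranging gives $(\cC^{-1})_{1,s}=-\tfrac12(\cC^{-1})_{K-1,s}$, completing the corollary.

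There is no serious obstacle here: once the right formula is attached to each row, the argument is pure binomial bookkeeping. The one point demanding care is the role of the hypothesis $s\le K-2$. It is precisely the condition that keeps the stray odd-index term in the last-row sum away from index $1$: if $s=K-1$ then $k-2s-2=1$ and the nonzero value $B_1=-\tfrac12$ would contribute, breaking both the closed form and the proportionality between the two rows. Flagging this boundary case is the only subtlety worth spelling out.
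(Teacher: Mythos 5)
Your proposal is correct and follows essentially the same route as the paper: use \eqref{eq:Cinv2} for the row $r=1$ and \eqref{eq:Cinv1} for $r=K-1$ so the outer binomial becomes $\binom{1}{\cdot}$ or $\binom{2}{\cdot}$, kill the odd-index Bernoulli terms, and finish with the identity $\frac{1}{2s-1}\binom{2K-2}{2s-2}=\frac{1}{2K-2s}\binom{2K-2}{2s-1}$. Your explicit remark that the hypothesis $s\le K-2$ is exactly what prevents $B_1$ from entering via the term $n=k-2s-2$ in the last row is a point the paper leaves implicit, and it is a worthwhile clarification.
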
	

By using $\cC^{-1}$, we can explicitly express the products $\zeta(2s)\zeta(k-2s)$, $1 \leq s \leq K-1$ in terms of double zeta values $\zeta(2r,k-2r)$, $1\leq r\leq K-1$.

\begin{cor}\label{cor:r2}
For odd $k = 2K+1 \geq 5$, modulo $\Q\zeta(k)$, the products $\zeta(2s)\zeta(k-2s)$, $1 \leq s \leq K-1$, can be expressed in terms of double zeta values $\zeta(2r,k-2r)$, $1\leq r\leq K-1$ as follows
\begin{equation}
\zeta(2s)\zeta(k-2s)\equiv\sum_{r=1}^{K-1}\big(\cC^{-1}\big)_{r,s}\zeta(2r,k-2r),
\end{equation}
where $\big(\cC^{-1}\big)_{r,s}$ is given by either \eqn{Cinv1} or \eqn{Cinv2}.
\end{cor}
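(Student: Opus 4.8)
The plan is to verify directly that $\cC M=I$, where $M$ denotes the $(K-1)\times(K-1)$ matrix whose $(r,s)$ entry is the right-hand side of \eqn{Cinv1}; since $\cC$ is square, a one-sided inverse suffices. Substituting $\cC_{r,t}=\binom{2K-2r}{2t-1}+\binom{2K-2r}{2K-2t}$ and $M_{t,s}=\frac{-2}{2s-1}\sum_{n=0}^{k-2s}\binom{2K-2t}{k-2s-n}\binom{n+2s-2}{n}B_n$ into $(\cC M)_{r,s}=\sum_{t=1}^{K-1}\cC_{r,t}M_{t,s}$, I would pull the prefactor $\frac{-2}{2s-1}$ and the summation over $n$ (with its Bernoulli weights) to the outside, so that the whole problem reduces to evaluating the inner sum over the matrix-multiplication index $t$,
\beq
T(r,j):=\sum_{t=1}^{K-1}\left[\binom{2K-2r}{2t-1}+\binom{2K-2r}{2K-2t}\right]\binom{2K-2t}{j},\qquad j=k-2s-n.
\eeq

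The first step is to put $T(r,j)$ in closed form. Writing $\binom{2K-2t}{j}=[X^{j}](1+X)^{2K-2t}$ and performing the $t$-sum by the binomial theorem, extracting the relevant parity classes through the identities $\sum_{m\ \mathrm{odd}}\binom{N}{m}y^{m}=\tfrac12\big[(1+y)^{N}-(1-y)^{N}\big]$ and its even analogue, one obtains $T(r,j)=[X^{j}]F_r(X)$ with the explicit polynomial
\beq
2F_r(X)=(2+X)^{2K-2r}\big[(1+X)^{2r-1}+1\big]-X^{2K-2r}\big[(1+X)^{2r-1}-1\big]-2.
\eeq
Here the truncation $1\le t\le K-1$ must be tracked carefully: it omits the boundary terms of the full even/odd binomial sums, and accounting for these is exactly what produces the constant $-2$. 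Recognizing the remaining convolution in $n$ as a single coefficient extraction then collapses the whole product to
\beq
(\cC M)_{r,s}=\frac{-2}{2s-1}\,[X^{k-2s}]\big(G_s(X)F_r(X)\big),\qquad G_s(X):=\sum_{n\ge0}\binom{n+2s-2}{n}B_nX^{n}.
\eeq

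The crux --- and the step I expect to be the main obstacle --- is to show this coefficient equals $-\tfrac{2s-1}{2}\delta_{rs}$, equivalently that $[X^{k-2s}]\big(G_s(X)\cdot2F_r(X)\big)=-(2s-1)\delta_{rs}$. Expanding $2F_r$, the constant $-2$ contributes $-2B_{k-2s}=0$, since $k-2s$ is odd and at least $3$. The pieces carrying the factor $(2+X)^{2K-2r}$ should contribute nothing; I have confirmed this vanishing in small cases, where it reduces to the vanishing of odd-index Bernoulli numbers. What remains is the pure-power part $-X^{2K-2r}\big[(1+X)^{2r-1}-1\big]$, whose coefficient unwinds to a finite combination of sums of the shape $\sum_n\binom{2r-1}{2r+1-2s-n}\binom{n+2s-2}{n}B_n$. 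Because $B_n=0$ for odd $n\ge3$, each such sum collapses to its low-order terms, built from $B_0=1$ and the exceptional value $B_1=-\tfrac12$: the contribution is empty for $r<s$, must cancel for $r>s$, and on the diagonal $r=s$ it produces exactly $-(2s-1)$. Establishing the off-diagonal vanishing together with the cancellation of the $(2+X)$-pieces amounts to a family of Bernoulli-number identities, most transparently handled through the reflection formula $B_n(1-x)=(-1)^nB_n(x)$ and the values $B_n(0)=B_n$, $B_n(1)=(-1)^nB_n$; organizing these cleanly is the delicate heart of the argument. The same reflection symmetry equates the two formulas \eqn{Cinv1} and \eqn{Cinv2}, whose difference is $\frac{2}{2s-1}\sum_n\big[\binom{2K-2r}{k-2s-n}+\binom{2r-1}{k-2s-n}\big]\binom{n+2s-2}{n}B_n$, so it suffices to prove either one.
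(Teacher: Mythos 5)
Your proposal never actually reaches the statement of Corollary \corol{r2}. The corollary is an assertion about real numbers: a congruence modulo $\Q\zeta(k)$ between the products $\zeta(2s)\zeta(k-2s)$ and the double zeta values $\zeta(2r,k-2r)$. Matrix algebra alone cannot produce such a statement; the indispensable ingredient is Zagier's relation (\ref{ep}), which (since $\cC=\cA^T$) says $\zeta(2r,k-2r)\equiv\sum_{s}\cC_{s,r}\,\zeta(2s)\zeta(k-2s)$, and which one then inverts by applying $\cC^{-1}$. That inversion is the entire content of the corollary: Theorem \thm{Cinv} is already proven at this point in the paper, so the paper's proof is the one-line remark that the corollary is a restatement of $\cC\cC^{-1}=I_{K-1}$. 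Your write-up never mentions (\ref{ep}); even if all your computations were completed, you would have re-proven Theorem \thm{Cinv} and still not concluded anything about zeta values.

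The computation you do attempt --- a direct verification of $\cC M=I$, i.e.\ an independent, generating-function proof of Theorem \thm{Cinv} --- is set up correctly but abandoned at its core. I checked that your closed form $2F_r(X)=(2+X)^{2K-2r}\big[(1+X)^{2r-1}+1\big]-X^{2K-2r}\big[(1+X)^{2r-1}-1\big]-2$ is right (the truncation bookkeeping producing the $-2$ included), as is the collapse of $(\cC M)_{r,s}$ to $\frac{-2}{2s-1}[X^{k-2s}]\big(G_s(X)F_r(X)\big)$ and the vanishing of the $-2$ contribution. But the crux, $[X^{k-2s}]\big(G_s\cdot 2F_r\big)=-(2s-1)\delta_{rs}$, is exactly where you stop proving and start asserting: the $(2+X)^{2K-2r}$ pieces ``should contribute nothing,'' the $r>s$ terms ``must cancel,'' and the equivalence of \eqn{Cinv1} and \eqn{Cinv2} follows from ``the same reflection symmetry.'' None of these is routine. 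Your claim that the $(2+X)$-vanishing ``reduces to the vanishing of odd-index Bernoulli numbers'' is already false in the smallest case $k=5$, $r=s=1$, where it is the genuine cancellation $B_0+6B_1+12B_2+8B_3=0$; and the equality of \eqn{Cinv1} and \eqn{Cinv2} is precisely Lemma \lem{FeG}, which the paper proves by induction using Carlitz's identity (Lemma \lem{Ca}). In other words, the statements you defer are Bernoulli-number identity families of the same depth as those occupying Sections \ref{sc3} and \ref{sc4} of the paper. As it stands, the proposal is a reasonable program for an alternative proof of Theorem \thm{Cinv}, not a proof of it, and in any case the corollary at hand only requires citing Theorem \thm{Cinv} together with (\ref{ep}).
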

 
In Section \ref{sc2}, we will define two $2$-variable polynomials corresponding to the two expressions of rows of $\cC^{-1}$, and prove Theorem \thm{Cinv}, Corollary \corol{r1} by using Theorem \thm{FFR}. In Section \ref{sc3}, we will state four lemmas, and use them to prove Theorem \thm{FFR}. In Section \ref{sc4}, we will prove all the lemmas stated in Section \ref{sc3}. Finally, in Section \ref{sc5}, we will extend some of the definitions and results stated before, and give a family of Bernoulli number identities.

\section{Proof of Theorem \thm{Cinv}}\label{sc2}

In this section, we will define two $2$-variable polynomials, and state a result about one of them. Later, in Section \ref{sc3}, we will see that the result also holds for the other one by proving that they are identical to each other.

\begin{definition}\label{dFG}
Let $k=2K+1\geq 5$ be an odd integer, for any $0\leq i\leq k-2$, we define the following two polynomials
\begin{eqnarray}
F_k^{(i)}(x,y)&:=&\sum_{s=1}^{K-1}\left(\frac{-2}{2s-1}\sum^{k-2s}_{n=0}{i\choose k-2s-n}{n+2s-2\choose n}B_n\right)x^{2K-2s}y^{2s-1};\\
G_k^{(i)}(x,y)&:=&\sum_{s=1}^{K-1}\left(\frac{2}{2s-1}\sum^{k-2s}_{n=0}{k-2-i\choose k-2s-n}{n+2s-2\choose n}B_n\right)x^{2K-2s}y^{2s-1}.
\end{eqnarray}
\end{definition}

Those two polynomials correspond to the two expression \eqn{Cinv1}, \eqn{Cinv2} of $\cC^{-1}$, and they are closely related to the following polynomial.

\begin{definition} Let $k=2K+1\geq 5$ be an odd integer, for any $0\leq i\leq k-2$, we define the following polynomial
\begin{equation}
R_k^{(i)}(x,y):=-\frac{k-2-2i}{k-2}x^{k-2}+(-1)^{i}x^{k-2-i}y^{i}+(-1)^{i}x^{i}y^{k-2-i}-\frac{k-2-2i}{k-2}y^{k-2}
\end{equation}
\end{definition}

The connection between $F_k^{(i)}(x,y)$, $G_k^{(i)}(x,y)$  and $R_k^{(i)}(x,y)$ can be stated as the following theorem, whose proof will be postponed to Section $3$.
\begin{theorem}\label{thm:FFR}
Let $k=2K+1\geq 5$ be an odd integer, for any $0\leq i\leq k-2$, we have
\begin{eqnarray}
F_k^{(i)}(x+y,x)+F_k^{(i)}(x+y,y)=R_k^{(i)}(x,y) \label{eq:FFR}\\
G_k^{(i)}(x+y,x)+G_k^{(i)}(x+y,y)=R_k^{(i)}(x,y) \label{eq:GGR}
\end{eqnarray}
\end{theorem}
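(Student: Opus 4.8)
The plan is to establish the functional equation \eqn{FFR} for $F_k^{(i)}$ and to deduce \eqn{GGR} for $G_k^{(i)}$ from it by the involution $i\mapsto k-2-i$, rather than treating the two identities independently. Comparing the coefficients in Definition \ref{dFG}, the coefficient of $x^{2K-2s}y^{2s-1}$ in $G_k^{(i)}$ is precisely the negative of the one in $F_k^{(k-2-i)}$, so $G_k^{(i)}=-F_k^{(k-2-i)}$ identically in $x,y$. On the geometric side one checks on the four monomials that $R_k^{(k-2-i)}=-R_k^{(i)}$: since $k$ is odd, both the rational factor $-\frac{k-2-2i}{k-2}$ and the sign $(-1)^i$ reverse under $i\mapsto k-2-i$, while the pair $x^{k-2-i}y^i+x^iy^{k-2-i}$ is fixed. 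Granting \eqn{FFR} for every admissible index, substituting $k-2-i$ for $i$ and using these two relations gives $G_k^{(i)}(x+y,x)+G_k^{(i)}(x+y,y)=-\big(F_k^{(k-2-i)}(x+y,x)+F_k^{(k-2-i)}(x+y,y)\big)=-R_k^{(k-2-i)}(x,y)=R_k^{(i)}(x,y)$, which is \eqn{GGR}. Thus the whole theorem reduces to \eqn{FFR}.

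For \eqn{FFR}, both sides are homogeneous of degree $k-2$, so it is enough to compare the coefficient of each monomial $x^ay^b$ with $a+b=k-2=2K-1$. Set $S_s:=\sum_{n=0}^{k-2s}\binom{i}{k-2s-n}\binom{n+2s-2}{n}B_n$, so that the coefficient of $x^{2K-2s}y^{2s-1}$ in $F_k^{(i)}$ is $-2S_s/(2s-1)$. Expanding $(x+y)^{2K-2s}$ by the binomial theorem and collecting terms, the coefficient of $x^ay^b$ in $F_k^{(i)}(x+y,x)+F_k^{(i)}(x+y,y)$ equals $\phi(a)+\phi(b)$, where $\phi(a):=-2\sum_{s=1}^{K-1}\frac{1}{2s-1}\binom{2K-2s}{a}S_s$. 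Hence \eqn{FFR} is equivalent to the finite family of scalar identities $\phi(a)+\phi(b)=[x^ay^b]R_k^{(i)}$; that is, $\phi(a)+\phi(b)$ must vanish unless $\{a,b\}=\{0,k-2\}$ or $\{a,b\}=\{i,k-2-i\}$, where it equals $-\frac{k-2-2i}{k-2}$ and $(-1)^i$ respectively. Since $\binom{2K-2s}{k-2}=0$ for all $s\ge1$, we have $\phi(k-2)=0$, so the diagonal case collapses to the single clean target $\sum_{s=1}^{K-1}\frac{S_s}{2s-1}=\frac{k-2-2i}{2(k-2)}$.

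The main work — and the step I expect to be the real obstacle — is the evaluation of the double sum defining $\phi(a)$. My approach would be to interchange the order of summation and collapse the inner $n$-sum by a coefficient-extraction argument, writing $\binom{i}{k-2s-n}=[w^{k-2s-n}](1+w)^i$ and $\binom{n+2s-2}{n}=[v^n](1-v)^{-(2s-1)}$, so that $S_s=[w^{k-2s}](1+w)^i\,\beta_{2s-1}(w)$ with $\beta_{2s-1}(w):=\sum_{n\ge0}\binom{n+2s-2}{n}B_nw^n$; the identity $\frac{1}{2s-1}=\int_0^1 t^{2s-2}\,dt$ should then let me remove that factor and carry out the summation over $s$ in closed form. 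After this the surviving expression ought to be identifiable in terms of Bernoulli polynomials, at which point the reflection formula $B_m(1-x)=(-1)^mB_m(x)$ together with the values $B_m(0)$, $B_m(1)$ produces the collapse to the three cases above. Showing that nearly everything cancels, leaving exactly the boundary and diagonal contributions with the stated coefficients, is the technical heart and is presumably where the four lemmas of Section \ref{sc3} enter. The one term that demands separate care is $n=1$, where $B_1=-\tfrac12$ is the unique nonvanishing odd-index Bernoulli number and so spoils the parity cancellation that kills the generic monomials; tracking it is what I would expect to generate the $i$-dependent part of the diagonal constant $-\frac{k-2-2i}{k-2}$. Before grinding through the general bookkeeping I would verify the whole scheme on $k=5$, where $F_5^{(i)}$ is a single monomial and \eqn{FFR} reduces to the cube of a linear form.
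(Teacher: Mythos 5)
Your reduction of \eqn{GGR} to \eqn{FFR} is correct, and it is actually lighter than the paper's: from Definition \ref{dFG} one indeed has $G_k^{(i)}=-F_k^{(k-2-i)}$ term by term, and since $k$ is odd one checks $R_k^{(k-2-i)}=-R_k^{(i)}$, so \eqn{GGR} follows from \eqn{FFR} applied at the index $k-2-i$. (The paper instead deduces \eqn{GGR} from the genuinely nontrivial identity $F_k^{(i)}=G_k^{(i)}$ of Lemma \lem{FeG}, whose proof needs Carlitz's symmetric Bernoulli identity; your involution argument bypasses that entirely for this step.) Your reformulation of \eqn{FFR} is also correct: with $S_s=\sum_{n=0}^{k-2s}\binom{i}{k-2s-n}\binom{n+2s-2}{n}B_n$ and $\phi(a)=-2\sum_{s=1}^{K-1}\frac{1}{2s-1}\binom{2K-2s}{a}S_s$, the identity is equivalent to $\phi(a)+\phi(b)=[x^ay^b]\,R_k^{(i)}(x,y)$ for all $a+b=k-2$, and $\phi(k-2)=0$ holds for the reason you give.

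The gap is that everything after this point is a plan, not a proof, and the plan is exactly where the entire content of the theorem sits. You never evaluate $\phi(a)$: the vanishing of $\phi(a)+\phi(b)$ for generic $(a,b)$, the value $(-1)^i$ at $\{a,b\}=\{i,k-2-i\}$, and even your ``single clean target'' $\sum_s S_s/(2s-1)=\tfrac{k-2-2i}{2(k-2)}$ (itself a nontrivial Bernoulli identity, playing the role of Lemma \lem{Ae0} in the paper) are all left unestablished. Moreover, the specific route you sketch has a concrete weak point: $\beta_{2s-1}(w)=\sum_{n\ge0}\binom{n+2s-2}{n}B_nw^n$ is an \emph{ordinary} generating function of Bernoulli numbers, which diverges for every $w\neq0$ and admits no closed form; and since $\binom{n+2s-2}{n}$ is not of the form $\binom{m}{n}$, these inner sums are not values of Bernoulli polynomials, so the reflection formula $B_m(1-x)=(-1)^mB_m(x)$ has nothing to act on. ``Carry out the summation over $s$ in closed form'' and ``identifiable in terms of Bernoulli polynomials'' are therefore hopes rather than steps. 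By contrast, the paper never evaluates these sums at all: it inducts on $k$, showing that both sides of \eqn{FFR} satisfy the same second-order derivative recursion in $y$ (Lemmas \lem{dyF}, \lem{FeG}, \lem{dyR}), which determines the difference up to two coefficients, and then kills those by a symmetry argument together with the single identity $A_k^{(i)}=0$ of Lemma \lem{Ae0}. To complete your proposal you would either have to carry out the analytic evaluation of $\phi(a)$ in full --- which I expect to be at least as hard as the original problem --- or switch to some such recursion on $k$.
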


Assuming Theorem \thm{FFR}, we are able to prove Theorem \thm{Cinv} now.
\begin{proof}[Proof of Theorem \thm{Cinv}]
Let $k = 2K + 1 \geq 5$ be an odd integer. According to the definition of the matrix $\cC$, we have
\begin{equation}
\cC_{s,r}={2K-2s\choose 2r-1}+{2K-2s\choose 2K-2r}.
\end{equation}
For the vector $v=(v_1,v_2,\ldots,v_{K-1})$, define its associated polynomial to be $V(x,y)=\sum_{s=1}^{K-1}v_s x^{2K-2s} y^{2s-1}$. Assume that $v\cC=w:=(w_1,w_2,\ldots,w_{K-1})$, then
\begin{eqnarray*}
W(x,y)&:=&\sum_{j=1}^{K-1}w_j x^{2j-1}y^{2K-2j}\\
&=&\textrm{$x^{2j-1}y^{2K-2j}$-terms in $V(x+y,x)+V(x+y,y)$ with $1\leq j\leq K-1$}.
\end{eqnarray*}
	
According to Theorem \thm{FFR}, for $1\leq i\leq K-1$, we have 
\begin{eqnarray}
&&\textrm{$x^{2j-1}y^{2K-2j}$-terms in $F_k^{(2i-1)}(x+y,x)+F_k^{(2i-1)}(x+y,y)$ with $1\leq j\leq K-1$}\\
&=&\textrm{$x^{2j-1}y^{2K-2j}$-terms in $R_k^{(2i-1)}(x,y)$ with $1\leq j\leq K-1$}\\
&=&x^{2i-1}y^{2K-2i}.
\end{eqnarray}
Therefore, for the matrix with the $i$th row coming from the associated vector of $F_k^{(2i-1)}(x,y)$, its product with $\cC$ gives us the identity matrix, i.e., we have
\begin{equation*} 
\big(\cC^{-1}\big)_{r,s}=\frac{2}{2s-1}\sum_{n=0}^{k-2s}{2r-1\choose k-2s-n}{n+2s-2\choose n}B_n\qquad(1\leq s,r\leq K-1).
\end{equation*}
By considering $G_k$ instead of $F_k$, we get another expression
\begin{equation*} 
\big(\cC^{-1}\big)_{r,s}=\frac{-2}{2s-1}\sum_{n=0}^{k-2s}{k-2r-1\choose k-2s-n}{n+2s-2\choose n}B_n\qquad(1\leq s,r\leq K-1).
\end{equation*}
Hence we have proven the statement.
\end{proof}

As a corollary, the first and the last rows of $\cC^{-1}$ can be reduced to simple multiples of Bernoulli numbers.

\begin{proof}[Proof of Corollary \corol{r1}]
Let $k = 2K + 1 \geq 5$ be an odd integer. When $r=1$ and $1\leq s\leq K-2$, we have
\begin{eqnarray*}
\big(\cC^{-1}\big)_{1,s}&\stackrel{\eqn{Cinv2}}{=}&\frac{2}{2s-1}\sum_{n=0}^{k-2s}{1\choose k-2s-n}{n+2s-2\choose n}B_n\\
&=&\frac{2}{2s-1}{k-2s-1+2s-2\choose k-2s-1}B_{k-2s-1}\\
&=&\frac{2}{2s-1}{2K-2\choose 2s-2}B_{2K-2s}\\
&=&2{2K-2\choose 2s-1}\frac{B_{2K-2s}}{2K-2s}.
\end{eqnarray*}
When $r=K-1$ and $1\leq s\leq K-2$, we have
\begin{eqnarray*}
\big(\cC^{-1}\big)_{K-1,s}&\stackrel{\eqn{Cinv1}}{=}&\frac{-2}{2s-1}\sum_{n=0}^{k-2s}{2\choose k-2s-n}{n+2s-2\choose n}B_n\\
&=&\frac{-2}{2s-1}{2\choose 1}{k-2s-1+2s-2\choose k-2s-1}B_{k-2s-1}\\
&=&\frac{-4}{2s-1}{2K-2\choose 2s-2}B_{2K-2s}\\
&=&-4{2K-2\choose 2s-1}\frac{B_{2K-2s}}{2K-2s}.
\end{eqnarray*}
\end{proof}

Corollary \corol{r2} is just the restatement of the fact $\cC\cC^{-1}=I_{K-1}$.

\section{Proof of Theorem \thm{FFR}}\label{sc3}
In this section, we will state four lemmas, and use them to prove Theorem \thm{FFR}. The proofs of those lemmas will be provided in the next section.

The first lemma tells us that the second derivatives in $y$ of $F_k^{(i)}(x,y)$ and $G_k^{(i)}(x,y)$ can be expressed as linear combinations of $F_{k-2}^{(j)}(x,y)$ and $G_{k-2}^{(j)}(x,y)$ for some $j$'s.
\begin{lemma}\label{lem:dyF}
Let $k=2K+1\geq 5$ be an odd integer, for any $0\leq i\leq k-2$, we have
\begin{eqnarray}
\frac{\partial^2}{\partial y^2}\left(F_k^{(i)}(x,y)\right)&=&(k-2-i)(k-3-i)F_{k-2}^{(i)}(x,y) \label{eq:dyF}\\ 
&-&2i(k-2-i)G_{k-2}^{(k-3-i)}(x,y) \nonumber\\
&+&i(i-1)F_{k-2}^{(i-2)}(x,y) \nonumber\\
\frac{\partial^2}{\partial y^2}\left(G_k^{(i)}(x,y)\right)&=&(k-2-i)(k-3-i)G_{k-2}^{(i)}(x,y) \label{eq:dyG}\\ 
&-&2i(k-2-i)F_{k-2}^{(k-3-i)}(x,y) \nonumber\\
&+&i(i-1)G_{k-2}^{(i-2)}(x,y) \nonumber
\end{eqnarray}
\end{lemma}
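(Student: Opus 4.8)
The plan is to verify both \eqn{dyF} and \eqn{dyG} by comparing coefficients of monomials. Writing $F_k^{(i)}(x,y)=\sum_{s=1}^{K-1}c_{k,s}^{(i)}\,x^{2K-2s}y^{2s-1}$, differentiating twice in $y$ multiplies the $s$-th coefficient by $(2s-1)(2s-2)$, so the $s=1$ term drops out and the surviving monomials are exactly $x^{2K-2-2s'}y^{2s'-1}$ for $1\le s'\le K-2$ after setting $s=s'+1$. These are precisely the monomials occurring in $F_{k-2}^{(j)}$ and $G_{k-2}^{(j)}$, whose weight-$(k-2)$ index runs over the same range, so it suffices to match the coefficient of each $x^{2K-2-2s'}y^{2s'-1}$. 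Writing out the inner sums, the coefficient on the left is a multiple of $\sum_n\binom{i}{m-n}\binom{n+2s'}{n}B_n$ with $m=k-2-2s'$, while each term on the right carries a factor $\tfrac{1}{2s'-1}$ and the weight $\binom{n+2s'-2}{n}B_n$ together with a binomial $\binom{i}{m-n}$, $\binom{i-1}{m-n}$, or $\binom{i-2}{m-n}$ (the last two coming from $G_{k-2}^{(k-3-i)}$ and $F_{k-2}^{(i-2)}$, whose inner binomial arguments I would compute first).

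Next I would align the two weights. The ratio $\binom{n+2s'}{n}=\frac{(n+2s')(n+2s'-1)}{(2s')(2s'-1)}\binom{n+2s'-2}{n}$ clears the $\tfrac{1}{2s'-1}$ denominators on the right and exposes a common factor $\binom{n+2s'-2}{n}B_n$ on both sides. After this the claim reduces, for each fixed $s'$, to
\[
\sum_n \binom{n+2s'-2}{n}B_n\Big[(n+2s')(n+2s'-1)\binom{i}{m-n}-\big(\cdots\big)\Big]=0,
\]
where $(\cdots)$ is the bracketed combination of $\binom{i}{m-n}$, $\binom{i-1}{m-n}$, $\binom{i-2}{m-n}$. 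The crucial observation is the substitution $a=m-n$: since $m+2s'=k-2$, one has $n+2s'=k-2-a$ and $n+2s'-1=k-3-a$, so the $n$-dependent prefactor on the left becomes $(k-2-a)(k-3-a)\binom{i}{a}$, a quantity depending only on $a$.

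The heart of the argument is then the purely algebraic, $B_n$-free identity
\[
(k-2-i)(k-3-i)\binom{i}{a}+2i(k-2-i)\binom{i-1}{a}+i(i-1)\binom{i-2}{a}=(k-2-a)(k-3-a)\binom{i}{a}.
\]
I would prove it using $i\binom{i-1}{a}=(i-a)\binom{i}{a}$ and $i(i-1)\binom{i-2}{a}=(i-a)(i-1-a)\binom{i}{a}$; factoring out $\binom{i}{a}$ and setting $P=k-2-i$, $Q=i-a$ collapses the bracket to $P(P-1)+2PQ+Q(Q-1)=(P+Q)(P+Q-1)$, and $P+Q=k-2-a$ finishes it. Because this holds termwise in $a$ (equivalently in $n$), the Bernoulli numbers never interact and the whole coefficient identity follows at once. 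For \eqn{dyG} the same reduction applies with the two binomial families exchanged: substituting $j=k-2-i$ turns the analogous right-hand combination $j(j-1)\binom{j-2}{a}+2(k-2-j)j\binom{j-1}{a}+(k-2-j)(k-3-j)\binom{j}{a}$ into $(k-2-a)(k-3-a)\binom{j}{a}$ by the identical manipulation.

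The steps are all mechanical; the one place I would be careful — and the likeliest source of error — is the index bookkeeping: verifying that differentiation kills the $s=1$ term, that the shift $s\mapsto s+1$ matches the weight-$(k-2)$ monomials, and above all that the inner binomial arguments for $G_{k-2}^{(k-3-i)}$ and $F_{k-2}^{(i-2)}$ evaluate to $\binom{i-1}{m-n}$ and $\binom{i-2}{m-n}$ respectively (and symmetrically for \eqn{dyG}). Once these are pinned down, no deeper obstacle remains.
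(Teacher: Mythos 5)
Your proposal is correct and follows essentially the same route as the paper: expand both sides, match coefficients of each monomial $x^{2K-2s}y^{2s-3}$, and reduce to a termwise, Bernoulli-free binomial identity via $i\binom{i-1}{a}=(i-a)\binom{i}{a}$ and $i(i-1)\binom{i-2}{a}=(i-a)(i-1-a)\binom{i}{a}$, collapsing the bracket to $(P+Q)(P+Q-1)=(n+2s-2)(n+2s-3)$ exactly as in the paper's computation. The only cosmetic difference is that you handle \eqref{eq:dyG} by the explicit substitution $j=k-2-i$, whereas the paper dismisses it as ``a similar computation.''
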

Although we do not have the definitions of $F_{k-2}^{(i)}$ and $G_{k-2}^{(i)}$ for $i=-1$ or $-2$, we can see from the above expressions that the coefficients before them would be $0$.

The next one says that $F_k^{(i)}(x,y)$ and $G_k^{(i)}(x,y)$ are actually identical to each other.
\begin{lemma}\label{lem:FeG}
Let $k=2K+1\geq 5$ be an odd integer, for any $0\leq i\leq k-2$, we have
\begin{equation}
F_k^{(i)}(x,y)=G_k^{(i)}(x,y)
\end{equation}
\end{lemma}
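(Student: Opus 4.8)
The plan is to prove $F_k^{(i)}=G_k^{(i)}$ by induction on $K$ (equivalently on the odd weight $k=2K+1$), using the second‑derivative recursion of Lemma~\lem{dyF} to collapse the problem, for every admissible $s$, onto a single Bernoulli‑number identity coming from the $s=1$ coefficient.

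First I would note that, by construction, $F_k^{(i)}$ and $G_k^{(i)}$ are both $\Q$‑linear combinations of the monomials $x^{2K-2s}y^{2s-1}$ with $1\le s\le K-1$, so their difference contains only odd powers of $y$, the smallest being $y^{1}$ (the $s=1$ term). For the base case $k=5$ (so $K=2$) only the $s=1$ term occurs, so the lemma is exactly the equality of the $s=1$ coefficients. For the inductive step I assume $F_{k-2}^{(j)}=G_{k-2}^{(j)}$ for all admissible $j$ with $0\le j\le k-4$; applying $\partial_y^2$ and substituting \eqn{dyF} and \eqn{dyG}, the three terms on the two right‑hand sides match pairwise under the inductive hypothesis — the outer terms use $F_{k-2}^{(i)},G_{k-2}^{(i)}$ and $F_{k-2}^{(i-2)},G_{k-2}^{(i-2)}$, which coincide, and the middle terms merely exchange $G_{k-2}^{(k-3-i)}$ with $F_{k-2}^{(k-3-i)}$, which again coincide. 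Here I invoke the remark after Lemma~\lem{dyF}: whenever an index leaves the admissible range, the accompanying coefficient $(k-2-i)(k-3-i)$, $2i(k-2-i)$, or $i(i-1)$ vanishes, so no undefined polynomial is ever used. Hence $\partial_y^2\bigl(F_k^{(i)}-G_k^{(i)}\bigr)=0$, so this difference is affine in $y$; carrying only odd powers of $y$, it must be a multiple of $y$, i.e. it reduces to its $s=1$ term. Thus in every case the lemma reduces to the equality of the $s=1$ coefficients, which after reindexing $j=w-n$ is precisely the statement that, with $w:=k-2$ (odd), $\sum_{n=0}^{w}\bigl[\binom{i}{w-n}+\binom{w-i}{w-n}\bigr]B_n=0$.

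The main obstacle, and the heart of the argument, is this Bernoulli identity. I would set $T_w(i):=\sum_{j=0}^{w}\binom{i}{j}B_{w-j}$, a degree‑$w$ polynomial in $i$, so that the target reads $T_w(i)+T_w(w-i)=0$. The plan is to prove the sharper reflection $T_w(w-i)=(-1)^w T_w(i)$ for all $w\ge 0$ by induction on $w$. Pascal's rule gives the discrete derivative $T_w(i+1)-T_w(i)=T_{w-1}(i)$, whence the reflected polynomial $\rho_w(i):=T_w(w-i)$ satisfies $\rho_w(i+1)-\rho_w(i)=-\rho_{w-1}(i)$. Assuming $\rho_{w-1}=(-1)^{w-1}T_{w-1}$, these two displays show that $\rho_w-(-1)^w T_w$ has vanishing forward difference, hence is constant in $i$; evaluating at $i=0$ identifies this constant with $T_w(w)-(-1)^w B_w$, which is $0$ since $T_w(w)=\sum_{n=0}^{w}\binom{w}{n}B_n=B_w(1)=(-1)^w B_w$ by the reflection formula $B_w(1-x)=(-1)^w B_w(x)$. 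This closes the induction, and specializing to odd $w$ gives $T_w(i)+T_w(w-i)=0$, which establishes the $s=1$ coefficient identity and hence Lemma~\lem{FeG}.
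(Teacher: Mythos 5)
Your proposal is correct, and its skeleton coincides with the paper's proof: both induct on $k$, use the two halves \eqn{dyF} and \eqn{dyG} of Lemma \lem{dyF} together with the inductive hypothesis to get $\partial_y^2\bigl(F_k^{(i)}-G_k^{(i)}\bigr)=0$, and then observe that the difference, being homogeneous of degree $k-2$, affine in $y$, and supported on odd powers of $y$, collapses to its $x^{k-3}y^1$ term. The divergence is in how that last coefficient is killed. The paper reindexes the two sums and invokes Carlitz's symmetric Bernoulli identity (Lemma \lem{Ca}, quoted from the literature), which with $m=i$, $n=k-2-i$ gives exactly the needed sign flip because $k-2$ is odd. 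You instead prove the required identity from scratch: writing $T_w(i)=\sum_{j=0}^{w}\binom{i}{j}B_{w-j}$ with $w=k-2$, you establish the reflection $T_w(w-i)=(-1)^wT_w(i)$ by induction on $w$, via the forward-difference relation $T_w(i+1)-T_w(i)=T_{w-1}(i)$ (Pascal's rule), constancy of a polynomial with vanishing forward difference, and the anchor $T_w(w)=B_w(1)=(-1)^wB_w$. Restricted to nonnegative integer arguments $m=i$, $n=w-i$, your reflection identity \emph{is} Carlitz's identity, so in effect you have supplied a self-contained proof of the paper's cited lemma; your difference argument is moreover pleasantly parallel to the paper's own proof of Lemma \lem{Ae0}, which uses the same Pascal-plus-induction device. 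What each route buys: the paper's is shorter given the reference, while yours makes the lemma self-contained and proves the identity as a polynomial identity in $i$, which is precisely the stronger form the paper later wants in Section \ref{sc5} (Proposition \propo{FeG} and Corollary \corol{r3}), where $i$ is allowed to be an arbitrary integer. Two minor points you should make explicit: the base case of your $w$-induction (say $w=0$, which is trivial), and the fact that the anchor $T_w(w)=(-1)^wB_w$ uses the convention $B_1=-\tfrac{1}{2}$ --- this is harmless, since that convention is forced throughout (Carlitz's identity with $m=0$, $n=1$ already requires it, and the paper's computations are consistent with it).
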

Later, in Section \ref{sc5}, we will show that the above lemma can be extended to a more general setting, which will give us more Bernoulli number identities. Not only $F_k^{(i)}(x,y)$ and $G_k^{(i)}(x,y)$ have nice properties for the second derivatives in $y$, $R_k^{(i)}$ also has one as the next lemma claims.
\begin{lemma}\label{lem:dyR}
Let $k=2K+1\geq 5$ be an odd integer, for any $0\leq i\leq k-2$
\begin{eqnarray}
\frac{\partial^2}{\partial y^2}\left(R_k^{(i)}(x-y,y)\right)&=&(k-2-i)(k-3-i)R_{k-2}^{(i)}(x-y,y)\\ 
&-&2i(k-2-i)R_{k-2}^{(k-3-i)}(x-y,y) \nonumber\\
&+&i(i-1)R_{k-2}^{(i-2)}(x-y,y) \nonumber
\end{eqnarray}
\end{lemma}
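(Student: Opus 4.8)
The plan is to reduce the claim to a polynomial identity in two free variables and then verify it by a direct expansion. Writing $u=x-y$ and $v=y$, so that $\partial u/\partial y=-1$ and $\partial v/\partial y=1$, the chain rule gives $\frac{\partial^2}{\partial y^2}=(\partial_u-\partial_v)^2=\partial_u^2-2\partial_u\partial_v+\partial_v^2$ acting on $R_k^{(i)}(u,v)$. Since the linear substitution $(x,y)\mapsto(x-y,y)$ is invertible, both sides of the asserted identity are the composition of the same pair of polynomials with this substitution, so it suffices to prove the identity with $(x-y,y)$ replaced throughout by independent variables $(u,v)$, namely
\begin{equation*}
(\partial_u-\partial_v)^2 R_k^{(i)}(u,v)=(k-2-i)(k-3-i)R_{k-2}^{(i)}(u,v)-2i(k-2-i)R_{k-2}^{(k-3-i)}(u,v)+i(i-1)R_{k-2}^{(i-2)}(u,v).
\end{equation*}

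First I would apply $(\partial_u-\partial_v)^2$ to the four monomials making up $R_k^{(i)}$. Setting $m=k-2$ and $\epsilon=(-1)^i$, the two ``mixed'' monomials $\epsilon u^{m-i}v^i$ and $\epsilon u^i v^{m-i}$ produce, after collecting, the three symmetric pairs $u^{m-2-i}v^i+u^iv^{m-2-i}$, $u^{m-i-1}v^{i-1}+u^{i-1}v^{m-i-1}$ and $u^{m-i}v^{i-2}+u^{i-2}v^{m-i}$, while the two pure powers contribute $-(m-2i)(m-1)(u^{m-2}+v^{m-2})$. On the other side, I would expand $R_{k-2}^{(i)}$, $R_{k-2}^{(k-3-i)}$ and $R_{k-2}^{(i-2)}$ directly from the definition; the mixed monomials of these three polynomials are exactly the three symmetric pairs above, so matching them is immediate once the prefactors are compared. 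One point that needs care here is the sign of the middle term: the index shift to $k-3-i$ produces a factor $(-1)^{k-3-i}$, which equals $\epsilon=(-1)^i$ precisely because $k$ is odd (so $k-3$ is even); this is where the hypothesis on the parity of $k$ enters.

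The only nontrivial matching is then the coefficient of the pure powers $u^{m-2}+v^{m-2}$, to which each of the three terms on the right contributes. Collecting those contributions reduces the whole lemma to the scalar identity
\begin{equation*}
(m-i)(m-1-i)(m-2-2i)+2i(m-i)(m-2i)+i(i-1)(m+2-2i)=(m-2i)(m-1)(m-2),
\end{equation*}
which I expect to be the main (though still elementary) obstacle. I would verify it by expanding both sides, or more cleanly by checking that after the $mi^2$, $i^2$, and $i^3$ terms cancel the left-hand side collapses to $(m-2i)(m^2-3m+2)=(m-2i)(m-1)(m-2)$. Finally I would note that the apparent difficulties at the boundary values $i=0,1$, where $R_{k-2}^{(i-2)}$ is undefined, and the appearance of negative exponents such as $u^{i-2}$, cause no trouble: in each case the accompanying coefficient $i(i-1)$, respectively the factor produced by the second derivative, vanishes, exactly as in the remark following Lemma \lem{dyF}.
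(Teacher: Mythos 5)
Your proposal is correct: reducing to independent variables $(u,v)$, matching the three symmetric pairs of mixed monomials (with the parity of $k$ handling the sign $(-1)^{k-3-i}=(-1)^i$), and verifying the scalar identity $(m-i)(m-1-i)(m-2-2i)+2i(m-i)(m-2i)+i(i-1)(m+2-2i)=(m-2i)(m-1)(m-2)$ for the pure-power coefficients is exactly the ``direct computation'' that the paper invokes but omits. I checked the scalar identity and the boundary cases, and everything holds, so your writeup in fact supplies the details the paper skips.
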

Similarly, it is fine that we do not have the definitions for $R_{k-2}^{(-1)}(x,y)$ and $R_{k-2}^{(-2)}(x,y)$ in the above lemma.

The next lemma gives the tool to compute the coefficient of $x^{k-3}y^1$ in $F_k^{(i)}(x,x-y)$.
\begin{lemma}\label{lem:Ae0}
For any positive integers $k$ and $i$, we have
\begin{equation}
A_k^{(i)}:=\sum_{s=1}^{[\frac{k}{2}]}\frac{-2}{2s-1}\sum_{n=0}^{k-2s}{i\choose k-2s-n}{n+2s-2\choose n}B_n=0,
\end{equation}
where $B_n$ denotes the $n$th Bernoulli number.
\end{lemma}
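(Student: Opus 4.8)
The plan is to collapse the double sum by a reindexing that isolates the factor $\binom{i}{k-2s-n}$, and then to reduce everything to a single alternating binomial sum that vanishes by the binomial theorem. First I would substitute $m=k-2s-n$ in the inner sum, so that $B_n=B_{k-2s-m}$ and $\binom{n+2s-2}{n}=\binom{k-m-2}{2s-2}$, and then interchange the order of summation to bring $\sum_{m}\binom{i}{m}$ outside. Writing $l=k-m$, this rewrites the quantity as $A_k^{(i)}=-2\sum_{m=0}^{i}\binom{i}{m}\,T_{k-m}$, where $T_l:=\sum_{s\ge 1}\frac{1}{2s-1}\binom{l-2}{2s-2}B_{l-2s}$ no longer involves $i$.

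The heart of the argument is then to evaluate the one-variable quantity $T_l$. I would use the absorption identity $\frac{1}{2s-1}\binom{l-2}{2s-2}=\frac{1}{l-1}\binom{l-1}{2s-1}$ to get $T_l=\frac{1}{l-1}\sum_{j\ \mathrm{odd}}\binom{l-1}{j}B_{l-1-j}$, i.e.\ a Bernoulli convolution restricted to odd $j$. Setting $N=l-1$ and reindexing by $i'=N-j$, this becomes $\frac{1}{N}\sum_{i'\not\equiv N\,(2)}\binom{N}{i'}B_{i'}$. Splitting by the parity of $N$ and invoking the standard recurrence $\sum_{j=0}^{N-1}\binom{N}{j}B_j=0$ (for $N\ge 2$, with $B_1=-\tfrac12$) together with the vanishing of $B_{i'}$ for odd $i'\ge 3$, the restricted convolution evaluates to $\pm N/2$, and hence $T_l=\frac{(-1)^l}{2}$.

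With this closed form in hand, the proof finishes immediately: substituting back gives $A_k^{(i)}=-2\sum_{m=0}^{i}\binom{i}{m}\frac{(-1)^{k-m}}{2}=-(-1)^{k}\sum_{m=0}^{i}\binom{i}{m}(-1)^{m}=-(-1)^{k}(1-1)^{i}$, which is $0$ because $i\ge 1$.

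I expect the main obstacle to be the careful evaluation of $T_l$: the formula $T_l=(-1)^l/2$ relies on $l\ge 3$, so the small-index values $T_1$ and $T_2$ (where $N=l-1<2$ and the Bernoulli recurrence no longer applies in its clean form) must be treated by hand and shown not to spoil the final cancellation in the range of $(k,i)$ that is actually used. The parity bookkeeping in the convolution—keeping track of which single odd Bernoulli number $B_1$ survives and correctly attributing the sign $(-1)^l$—is the delicate step, while everything else (the reindexing and the terminal binomial collapse) is routine.
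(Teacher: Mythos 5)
Your reduction is a genuinely different route from the paper's: the paper swaps the order of summation and runs an induction on $k$, using Pascal's rule to get the recursion $A_k^{(i+1)}-A_k^{(i)}=A_{k-1}^{(i)}$ and then citing the base cases $A_3^{(i)}=0$ and $A_k^{(0)}=0$; you instead evaluate $A_k^{(i)}$ in closed form. Your intermediate steps are all correct: $A_k^{(i)}=-2\sum_{m}\binom{i}{m}T_{k-m}$ (the sum effectively over $0\le m\le\min(i,k-2)$), the absorption identity, and $T_l=(-1)^l/2$ for $l\ge 3$, which uses $\sum_{j=0}^{N-1}\binom{N}{j}B_j=0$ (valid only for $N\ge 2$) plus the vanishing of odd-index Bernoulli numbers beyond $B_1$.

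However, the obstacle you flagged at the end is not a removable technicality: the exceptional values $T_2=1$ and $T_1=0$ do spoil the cancellation, and the lemma as stated is in fact \emph{false} once $i\ge k-2$. Your own closed form shows this: for $i=k-2$,
\begin{equation*}
A_k^{(k-2)}=-2\sum_{m=0}^{k-3}\binom{k-2}{m}\frac{(-1)^{k-m}}{2}-2T_2=1-2=-1\neq 0,
\end{equation*}
using $\sum_{m=0}^{k-3}\binom{k-2}{m}(-1)^m=-(-1)^{k}$; likewise $A_k^{(k-1)}=-k$. The smallest counterexample is $A_3^{(1)}=-2(1+B_1)=-1$ (the paper's convention is $B_1=-\tfrac12$, as its $k=5$ computations confirm), and similarly $A_5^{(3)}=-1$, $A_5^{(4)}=-5$. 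So no completion of your argument---or any argument---can prove the statement for all positive $k,i$. What your computation does prove, completely and correctly, is that $A_k^{(i)}=0$ whenever $1\le i\le k-3$: there every index satisfies $k-m\ge k-i\ge 3$, so $T_{k-m}=(-1)^{k-m}/2$ throughout and the collapse $\sum_{m=0}^{i}\binom{i}{m}(-1)^m=0$ applies. That restricted range is exactly what the paper needs, since Lemma \ref{lem:Ae0} is invoked in the proof of Theorem \ref{thm:FFR} only for $1\le i\le k-3$. Note that the paper's own proof is defective at the same spot: its Pascal recursion is correct, but both claimed base cases fail (in fact $A_3^{(i)}=1-2i$ and $A_k^{(0)}=(-1)^{k+1}$ for $k\ge 3$). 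Your method has the advantage of locating exactly where the vanishing holds and where it breaks; the correct repair, for both proofs, is to restate the lemma with the hypothesis $1\le i\le k-3$.
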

Now we can use the above four lemmas to prove Theorem \thm{FFR}.
\begin{proof}[Proof of Theorem \thm{FFR}]
Let $k=2K+1\geq 5$ be an odd integer, and $i$ be any integer satisfying $0\leq i\leq k-2$. We only need to prove the following identity, which is obtained from \eqn{FFR} by change-of-variables $x\to x-y$ and $y\to y$
\begin{equation}\label{eq:FFR2}
F_k^{(i)}(x,x-y)+F_k^{(i)}(x,y)=R_k^{(i)}(x-y,y).
\end{equation}
When $i=0$ or $i=k-2$, both sides equal to $0$ by the definition. Hence we only need to prove \eqn{FFR2} for $1\leq i\leq k-3$. We will prove the statement by induction. For $k=5$, it is easy to check that 
\begin{eqnarray*}
F_5^{(1)}(x,x-y)+F_5^{(1)}(x,y)=R_5^{(1)}(x-y,y)&=&-\frac{1}{3}x^3;\\
F_5^{(2)}(x,x-y)+F_5^{(2)}(x,y)=R_5^{(2)}(x-y,y)&=&\frac{1}{3}x^3.
\end{eqnarray*}
Assume that 
\begin{equation*}
F_{k-2}^{(i)}(x,x-y)+F_{k-2}^{(i)}(x,y)=R_{k-2}^{(i)}(x-y,y).
\end{equation*}
is true for any $i$ satisfying $0\leq i\leq k-4$.

For any $1\leq i\leq k-3$, by Lemma \lem{dyF}, Lemma \lem{FeG} and Lemma \lem{dyR}, we have
\begin{eqnarray*}
&&\frac{\partial^2}{\partial y^2}\bigg(F_k^{(i)}(x,x-y)+F_k^{(i)}(x,y)\bigg)\\
&\stackrel{Lemma\ \lem{dyF}}{=}&(k-2-i)(k-3-i)\bigg(F_{k-2}^{(i)}(x,x-y)+F_{k-2}^{(i)}(x,y)\bigg)\\ 
&-&2i(k-2-i)\bigg(G_{k-2}^{(k-3-i)}(x,x-y)+G_{k-2}^{(k-3-i)}(x,y)\bigg)\\
&+&i(i-1)\bigg(F_{k-2}^{(i-2)}(x,x-y)+F_{k-2}^{(i-2)}(x,y)\bigg)\\
&\stackrel{Lemma\ \lem{FeG}}{=}&(k-2-i)(k-3-i)\bigg(F_{k-2}^{(i)}(x,x-y)+F_{k-2}^{(i)}(x,y)\bigg)\\ 
&-&2i(k-2-i)\bigg(F_{k-2}^{(k-3-i)}(x,x-y)+F_{k-2}^{(k-3-i)}(x,y)\bigg)\\
&+&i(i-1)\bigg(F_{k-2}^{(i-2)}(x,x-y)+F_{k-2}^{(i-2)}(x,y)\bigg)\\
&\stackrel{Induction\ Hypothesis}{=}&(k-2-i)(k-3-i)R_{k-2}^{(i)}(x-y,y)\\ 
&-&2i(k-2-i)R_{k-2}^{(k-3-i)}(x-y,y)\\
&+&i(i-1)R_{k-2}^{(i-2)}(x-y,y)\\
&\stackrel{Lemma\ \lem{dyR}}{=}&\frac{\partial^2}{\partial y^2}\bigg(R_k^{(i)}(x-y,y)\bigg).
\end{eqnarray*}
Hence,
\begin{equation*}
F_k^{(i)}(x,x-y)+F_k^{(i)}(x,y)-R_k^{(i)}(x-y,y)=a_1x^{k-3}y^1+a_0x^{k-2}.
\end{equation*}
After making change-of-variable as $x\to x+y$ and $y\to y$, we get
\begin{equation*}
F_k^{(i)}(x+y,x)+F_k^{(i)}(x+y,y)-R_k^{(i)}(x,y)=a_1(x+y)^{k-3}y^1+a_0(x+y)^{k-2}.
\end{equation*}
Since both $F_k^{(i)}(x+y,x)+F_k^{(i)}(x+y,y)-R_k^{(i)}(x,y)$ and $a_0(x+y)^{k-2}$ are symmetric about $x$ and $y$, and $a_1(x+y)^{k-3}y^1$ is not, we have $a_1=0$, i.e.
\begin{equation*}
F_k^{(i)}(x,x-y)+F_k^{(i)}(x,y)-R_k^{(i)}(x-y,y)=a_0x^{k-2}.
\end{equation*}
Now let us consider the coefficients of $x^{k-2}$ in the LHS. By Lemma \lem{Ae0}, we have
\begin{eqnarray*}
&&\textrm{coefficient of $x^{k-2}$ in $F_k^{(i)}(x,x-y)+F_k^{(i)}(x,y)-R_k^{(i)}(x-y,y)$}\\
&=&\sum_{s=1}^{K-1}\left(\frac{-2}{2s-1}\sum^{k-2s}_{n=0}{i\choose k-2s-n}{n+2s-2\choose n}B_n\right)+0-\frac{k-2-2i}{k-2}\\
&=&A_k^{(i)}-\left(\frac{-2}{2K-1}\sum^{k-2K}_{n=0}{i\choose k-2K-n}{n+2K-2\choose n}B_n\right)-\frac{k-2-2i}{k-2}\\
&\stackrel{Lemma\ \lem{Ae0}}{=}&0-\left(\frac{-2}{k-2}\sum^{1}_{n=0}{i\choose 1-n}{n+k-3\choose n}B_n\right)-\frac{k-2-2i}{k-2}\\
&=&\frac{2}{k-2}\left(iB_0+(k-2)B_1\right)-\frac{k-2-2i}{k-2}\\
&=&0.
\end{eqnarray*}
Therefore, we have $a_0=0$, i.e.
\begin{equation*}
F_k^{(i)}(x,x-y)+F_k^{(i)}(x,y)=R_k^{(i)}(x-y,y).
\end{equation*}
By induction, we have proven the statement for $F$. The result for $G$ will follow directly from Lemma \lem{FeG}.
\end{proof}

\section{Proof of lemmas}\label{sc4}
In this section, we will prove all the lemmas stated in the last section.
\begin{proof}[Proof of Lemma \lem{dyF}]
Let $k=2K+1\geq 5$ be an odd integer, and $i$ be any integer satisfying $0\leq i\leq k-2$. 
When $i=0$ or $k-2$, both sides of \eqn{dyF} are $0$. Assume that we have $1\leq i\leq k-3$.
Let us compare two sides of \eqn{dyF}.
\begin{eqnarray*}
LHS&=&\frac{\partial^2}{\partial y^2}\left(F_k^{(i)}(x,y)\right)\\
&=&\sum_{s=2}^{K-1}\left(\frac{-2}{2s-1}\sum^{k-2s}_{n=0}{i\choose k-2s-n}{n+2s-2\choose n}B_n\right)(2s-1)(2s-2)x^{2K-2s}y^{2s-3}\\
RHS&=&(k-2-i)(k-3-i)F_{k-2}^{(i)}(x,y)-2i(k-2-i)G_{k-2}^{(k-3-i)}(x,y)+i(i-1)F_{k-2}^{(i-2)}(x,y)\\
&=&(k-2-i)(k-3-i)\sum_{s=1}^{K-2}\left(\frac{-2}{2s-1}\sum^{k-2-2s}_{n=0}{i\choose k-2-2s-n}{n+2s-2\choose n}B_n\right)x^{2K-2-2s}y^{2s-1}\\
&-&2i(k-2-i)\sum_{s=1}^{K-2}\left(\frac{2}{2s-1}\sum^{k-2-2s}_{n=0}{k-4-(k-3-i)\choose k-2-2s-n}{n+2s-2\choose n}B_n\right)x^{2K-2-2s}y^{2s-1}\\
&+&i(i-1)\sum_{s=1}^{K-2}\left(\frac{-2}{2s-1}\sum^{k-2-2s}_{n=0}{i-2\choose k-2-2s-n}{n+2s-2\choose n}B_n\right)x^{2K-2-2s}y^{2s-1}\\
&=&(k-2-i)(k-3-i)\sum_{s=2}^{K-1}\left(\frac{-2}{2s-3}\sum^{k-2s}_{n=0}{i\choose k-2s-n}{n+2s-4\choose n}B_n\right)x^{2K-2s}y^{2s-3}\\
&-&2i(k-2-i)\sum_{s=2}^{K-1}\left(\frac{2}{2s-3}\sum^{k-2s}_{n=0}{i-1\choose k-2s-n}{n+2s-4\choose n}B_n\right)x^{2K-2s}y^{2s-3}\\
&+&i(i-1)\sum_{s=2}^{K-1}\left(\frac{-2}{2s-3}\sum^{k-2s}_{n=0}{i-2\choose k-2s-n}{n+2s-4\choose n}B_n\right)x^{2K-2s}y^{2s-3}
\end{eqnarray*}
Fix any $s$ and $n$, the coefficient of $B_n x^{2K-2s}y^{2s-3}$ on the RHS will be
\begin{eqnarray*}
&&\frac{-2}{2s-3}{i\choose k-2s-n}{n+2s-4 \choose n}\\
&\times&\left[(k-2-i)(k-3-i)+2(i-k+2s+n)(k-2-i)+(i-k+2s+n)(i-1-k+2s+n)\right]\\
&=&\frac{-2}{2s-3}{i\choose k-2s-n}{n+2s-4 \choose n}(n+2s-2)(n+2s-3)\\
&=&-2(2s-2){i\choose k-2s-n}{n+2s-4\choose n}\frac{(n+2s-2)(n+2s-3)}{(2s-2)(2s-3)}\\
&=&-2(2s-2){i\choose k-2s-n}{n+2s-2\choose n},
\end{eqnarray*}
which is exactly the coefficient of $B_n x^{2K-2s}y^{2s-3}$ on the LHS. Therefore, we have shown \eqn{dyF}.
A similar computation proves \eqn{dyG}, so we will save the proof for length.
\end{proof}

In order to prove Lemma \lem{FeG}, we need the following lemma, which is known as Carlitz's symmetric Bernoulli number identity.
\begin{lemma}[\ref{Ca},\ref{Sh}]\label{lem:Ca}
For any positive integers $m,n$, we have
\begin{equation}
(-1)^m\sum_{k=0}^m{m\choose k}B_{n+k}=(-1)^n\sum_{k=0}^n {n\choose k}B_{m+k}
\end{equation}
\end{lemma}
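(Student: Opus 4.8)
The plan is to prove Lemma~\lem{Ca} by packaging both sides into a single two-variable exponential generating function and reducing the claimed equality to an elementary symmetry. Throughout write $\beta(t)=\dfrac{t}{e^t-1}=\sum_{j\ge 0}B_j\dfrac{t^j}{j!}$ and set
\[
S(m,n):=\sum_{k=0}^m\binom{m}{k}B_{n+k},
\]
so that the lemma asserts $(-1)^mS(m,n)=(-1)^nS(n,m)$ for all $m,n\ge 0$ (the case of positive $m,n$ being what is stated). The first step is to evaluate the bivariate generating function $\Phi(x,y):=\sum_{m,n\ge 0}\dfrac{x^m}{m!}\dfrac{y^n}{n!}S(m,n)$ in closed form.

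To carry this out I would sum first over $m$. Interchanging the order of summation and using $\sum_{m\ge k}\binom{m}{k}\dfrac{x^m}{m!}=\dfrac{x^k}{k!}e^x$ gives $\sum_{m\ge 0}\dfrac{x^m}{m!}S(m,n)=e^x\sum_{k\ge 0}B_{n+k}\dfrac{x^k}{k!}=e^x\beta^{(n)}(x)$, where $\beta^{(n)}$ denotes the $n$-th derivative. Summing this against $\dfrac{y^n}{n!}$ and invoking Taylor's theorem in the form $\sum_{n\ge 0}\dfrac{y^n}{n!}\beta^{(n)}(x)=\beta(x+y)$ yields the clean closed form
\[
\Phi(x,y)=e^x\,\beta(x+y).
\]
Reading off the relevant signed generating functions is then immediate: the generating function of $(-1)^mS(m,n)$ is $\Phi(-x,y)=e^{-x}\beta(y-x)$, whereas the generating function of the transposed array $(-1)^nS(n,m)$, with $x$ still tracking the first index and $y$ the second, is obtained by swapping the two indices and moving the sign into the $y$-slot, giving $\Phi(-y,x)=e^{-y}\beta(x-y)$.

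It then remains to prove the symmetry $e^{-x}\beta(y-x)=e^{-y}\beta(x-y)$, and this reduces to the single classical reflection identity $\beta(-t)=e^{t}\beta(t)$, equivalently $\dfrac{-t}{e^{-t}-1}=\dfrac{t\,e^{t}}{e^{t}-1}$, which is a one-line manipulation of $\beta$ (and is exactly the generating-function shadow of $B_j(1)=(-1)^jB_j$). Applying it with $t=y-x$ converts $\beta(x-y)$ into $e^{y-x}\beta(y-x)$ and collapses the two sides to the same function, so their coefficients agree and the lemma follows. The only real obstacle here is bookkeeping rather than analysis: one must track carefully which formal variable records which index when passing from $S(m,n)$ to the transpose $S(n,m)$, and where the two signs attach, since a mismatch there silently produces the wrong closed form. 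As a shortcut the same computation can be phrased umbrally: with $B_j=\mathcal B^{\,j}$ one has $S(m,n)=\mathcal B^{\,n}(1+\mathcal B)^m$, and the reflection $(1+\mathcal B)^j=(-\mathcal B)^j$ gives the identity in a couple of lines; I would nonetheless present the generating-function version to keep every step rigorous.
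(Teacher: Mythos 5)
Your proof is correct, but note that the paper gives no proof of Lemma \lem{Ca} at all: it simply cites Carlitz \cite{Ca} and Shannon \cite{Sh}, so your argument is a self-contained substitute rather than a parallel to anything in the paper. The computation itself is sound: the interchange of summation, the evaluation $\sum_{m\ge k}\binom{m}{k}x^m/m!=e^x x^k/k!$, the Taylor step $\sum_{n}\beta^{(n)}(x)y^n/n!=\beta(x+y)$, and the reflection $\beta(-t)=e^t\beta(t)$ are all legitimate identities, and they do collapse the two signed generating functions to the common value $e^{-x}\beta(y-x)=e^{-y}\beta(x-y)$, which yields $(-1)^mS(m,n)=(-1)^nS(n,m)$ coefficientwise (in fact for all $m,n\ge 0$, slightly more than stated). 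Two caveats. First, you should say explicitly that the manipulations take place in the ring of formal power series $\Q[[x,y]]$ (or else restrict to $|x|+|y|<2\pi$): since $|B_{2k}|$ grows like $(2k)!/(2\pi)^{2k}$, the function $\beta$ is not entire, and as written your argument mixes formal and analytic language. Second, the umbral shortcut at the end is not rigorous as stated --- replacing $(1+\mathcal B)^m$ by $(-\mathcal B)^m$ inside the product $\mathcal B^{\,n}(1+\mathcal B)^m$ is essentially equivalent to the identity being proven --- but you explicitly decline to rely on it, so this costs you nothing. Finally, one structural point worth knowing: in Section~\ref{sc5} the paper leans on the claim that the known proof of Lemma \lem{Ca} uses only the listed binomial-coefficient identities, which is what lets it extend Lemma \lem{FeG} to negative upper arguments (Proposition \propo{FeG}); your generating-function proof is intrinsically about nonnegative integer arguments and would not by itself support that extension, though it is entirely adequate for the lemma as stated and for its use in the proof of Lemma \lem{FeG}.
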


\begin{proof}[Proof of Lemma \lem{FeG}]
Let $k=2K+1\geq 5$ be an odd integer, and $i$ be any integer satisfying $0\leq i\leq k-2$. We will use the induction on $k$ to prove the result. For $k=5$, it is easy to check from the definition that
\begin{eqnarray*}
F_5^{(0)}(x,y)=G_5^{(0)}(x,y)&=&0\\
F_5^{(1)}(x,y)=G_5^{(1)}(x,y)&=&-\frac{1}{3}x^2y\\
F_5^{(2)}(x,y)=G_5^{(2)}(x,y)&=&\frac{1}{3}x^2y\\
F_5^{(3)}(x,y)=G_5^{(3)}(x,y)&=&0.
\end{eqnarray*}
Assume that we have $F_{k-2}^{(i)}(x,y)=G_{k-2}^{(i)}(x,y)$ for any $i$ satisfying $0\leq i\leq k-4$. Then by Lemma \lem{dyF}, we have
\begin{eqnarray*}
\frac{\partial^2}{\partial y^2}\left(F_k^{(i)}(x,y)-G_k^{(i)}(x,y)\right)&\stackrel{Lemma\ \lem{dyF}}{=}&(k-2-i)(k-3-i)\big(F_{k-2}^{(i)}(x,y)-G_{k-2}^{(i)}(x,y)\big) \\ 
&-&2i(k-2-i)\big(G_{k-2}^{(k-3-i)}(x,y)-F_{k-2}^{(k-3-i)}(x,y)\big) \\
&+&i(i-1)\big(F_{k-2}^{(i-2)}(x,y)-G_{k-2}^{(i-2)}(x,y)\big) \\
&=&0.
\end{eqnarray*}
Hence, 
\begin{equation*}
F_k^{(i)}(x,y)-G_k^{(i)}(x,y)=a_1x^{k-3}y^1+a_0x^{k-2}.
\end{equation*}
According to the definition, there are no $x^{k-2}$-terms in both $F_k^{(i)}(x,y)$ and $G_k^{(i)}(x,y)$, i.e. we have
\begin{equation*}
F_k^{(i)}(x,y)-G_k^{(i)}(x,y)=a_1x^{k-3}y^1.
\end{equation*}
Now let us compare the $x^{k-3}y^1$-terms in $F_k^{(i)}(x,y)$ and $G_k^{(i)}(x,y)$.
\begin{eqnarray*}
&&\textrm{coefficient of $x^{k-3}y^1$ in $F_k^{(i)}(x,y)$}\\
&=&-2\sum^{k-2}_{n=0}{i\choose k-2-n}B_n\\
&=&-2\sum^{k-2}_{n=0}{i\choose i-k+2+n}B_n\\
&=&-2\sum^{i}_{j=0}{i\choose j}B_{j+(k-2-i)}\\
&\stackrel{Lemma\ \lem{Ca}}{=}&2\sum^{k-2-i}_{j=0}{k-2-i\choose j}B_{j+i}\\
&=&2\sum^{k-2}_{n=0}{k-2-i\choose n-i}B_n\\
&=&2\sum^{k-2}_{n=0}{k-2-i\choose k-2-n}B_n\\
&=&\textrm{coefficient of $x^{k-3}y^1$ in $G_k^{(i)}(x,y)$}.
\end{eqnarray*}
Therefore, we have $a_1=0$, i.e.
\begin{equation*}
F_k^{(i)}(x,y)-G_k^{(i)}(x,y)=0.
\end{equation*}
By induction, we have proven the lemma.
\end{proof}

\begin{proof}[Proof of Lemma \lem{dyR}]
This can be proven by a direct computation, and we will skip the proof here.
\end{proof}
\begin{proof}[Proof of Lemma \lem{Ae0}]
Let $k$ and $i$ be any positive integers. We can rewrite $A_k^{(i)}$ as follows
\begin{eqnarray*}
A_k^{(i)}&=&\sum_{s=1}^{[\frac{k}{2}]}\frac{-2}{2s-1}\sum_{n=0}^{k-2s}{i\choose k-2s-n}{n+2s-2\choose n}B_n\\
&=&\sum_{n=0}^{k-2}\bigg(\sum_{s=1}^{[\frac{k-n}{2}]}\frac{-2}{2s-1}{i\choose k-2s-n}{n+2s-2\choose n}\bigg)B_n.
\end{eqnarray*}
It is easy to check that $A_3^{i}=0$ for any $i$. Now we will use induction to prove the statement. Assume that we have $A_{k-1}^{(i)}=0$ for any $i$.
For any $i$ we have
\begin{eqnarray*}
A_k^{(i+1)}-A_k^{(i)}&=&\sum_{n=0}^{k-2}\bigg[\sum_{s=1}^{[\frac{k-n}{2}]}\frac{-2}{2s-1}\bigg({i+1\choose k-2s-n}-{i\choose k-2s-n}\bigg){n+2s-2\choose n}\bigg]B_n\\
&=&\sum_{n=0}^{k-2}\bigg[\sum_{s=1}^{[\frac{k-n}{2}]}\frac{-2}{2s-1}{i\choose k-1-2s-n}{n+2s-2\choose n}\bigg]B_n,
\end{eqnarray*}
When $n=k-2$, we have
$${i\choose k-1-2s-n}={i\choose 1-2s}=0\qquad\textrm{for any $s\geq 1$}.$$
Also when $s=\frac{k-n}{2}$ is an integer, we have
$${i\choose k-1-2s-n}={i\choose -1}=0.$$
Hence,
\begin{eqnarray*}
A_k^{(i+1)}-A_k^{(i)}&=&\sum_{n=0}^{k-2}\bigg[\sum_{s=1}^{[\frac{k-n}{2}]}\frac{-2}{2s-1}{i\choose k-1-2s-n}{n+2s-2\choose n}\bigg]B_n\\
&=&\sum_{n=0}^{k-3}\bigg[\sum_{s=1}^{[\frac{k-1-n}{2}]}\frac{-2}{2s-1}{i\choose k-1-2s-n}{n+2s-2\choose n}\bigg]B_n\\
&=&A_{k-1}^{(i)}\\
&=&0,
\end{eqnarray*}
i.e., we have $A_k^{(i+1)}=A_k^{(i)}$ for any $i$.
Now since $A_k^{(0)}=0$, we have $A_k^{(i)}$ for any $i$.
\end{proof}

\section{Extension of Definition \ref{dFG} and Lemma \lem{FeG}}\label{sc5}
In this section, we will give an extension of Definition \ref{dFG} and Lemma \lem{FeG} for arbitrary integer $i$. It will then give us more Bernoulli number identities.
\begin{definition} Let $k=2K+1\geq 5$ be an odd integer, for any integer $i$, we define the following two polynomials
\begin{eqnarray}
F_k^{(i)}(x,y)&:=&\sum_{s=1}^{K-1}\left(\frac{-2}{2s-1}\sum^{k-2s}_{n=0}{i\choose k-2s-n}{n+2s-2\choose n}B_n\right)x^{2K-2s}y^{2s-1};\\
G_k^{(i)}(x,y)&:=&\sum_{s=1}^{K-1}\left(\frac{2}{2s-1}\sum^{k-2s}_{n=0}{k-2-i\choose k-2s-n}{n+2s-2\choose n}B_n\right)x^{2K-2s}y^{2s-1},
\end{eqnarray}
where the binomial coefficient is defined by 
\begin{equation}
{x\choose y}=\frac{\Gamma(x+1)}{\Gamma(y+1)\Gamma(x-y+1)}.
\end{equation} 
\end{definition}
The binomial coefficient for negative arguments is explicitly computed by the following theorem.

\begin{theoremNoNum}[\cite{Kr}]
For negative integer $n$ and integer $k$, we have
\begin{equation}
{n\choose k}=\begin{cases}
\begin{displaystyle}(-1)^k{-n+k-1\choose k}\end{displaystyle}&\textrm{ if $k\geq 0$}\\
\begin{displaystyle}(-1)^{n-k}{-k-1\choose n-k}\end{displaystyle}&\textrm{ if $k\leq n$}\\
0&\textrm{ otherwise}
\end{cases}
\end{equation}
\end{theoremNoNum}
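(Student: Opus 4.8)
The plan is to work from the Gamma-function definition $\binom{x}{y}=\Gamma(x+1)/\bigl(\Gamma(y+1)\Gamma(x-y+1)\bigr)$, reading it at the integer point as the symmetric limit
\[
\binom{n}{k}=\lim_{\epsilon\to 0}\frac{\Gamma(n+1+\epsilon)}{\Gamma(k+1+\epsilon)\,\Gamma(n-k+1+\epsilon)},
\]
which is forced on us because $\Gamma$ has a simple pole at each non-positive integer and any of the three arguments $n+1$, $k+1$, $n-k+1$ may land there. The only analytic input needed is the local behaviour $\Gamma(-m+\epsilon)=\tfrac{(-1)^m}{m!}\epsilon^{-1}+O(1)$ at $m=0,1,2,\dots$ (equivalently the reflection formula $\Gamma(z)\Gamma(1-z)=\pi/\sin\pi z$), together with the symmetry $\binom{n}{k}=\binom{n}{n-k}$, which is manifest from the definition and is preserved by the symmetric regularization.

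First I would dispose of the case $k\ge 0$. Here $\binom{x}{k}=x(x-1)\cdots(x-k+1)/k!$ is a genuine polynomial in $x$ that agrees with the Gamma quotient wherever the latter is regular, hence equals its limit at $x=n$; its value follows from the upper-negation identity $x(x-1)\cdots(x-k+1)=(-1)^k(-x)(-x+1)\cdots(-x+k-1)$, and evaluating at $x=n$ gives $\binom{n}{k}=(-1)^k\binom{-n+k-1}{k}$, the first stated formula. For the case $k\le n$ (so that $n-k\ge 0$) I would not recompute anything: the symmetry $\binom{n}{k}=\binom{n}{n-k}$ returns us to the already-settled regime with lower index $n-k\ge 0$, and substituting into the first formula yields $\binom{n}{k}=(-1)^{n-k}\binom{-k-1}{n-k}$, the second formula.

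For the remaining ``otherwise'' regime, $n<k<0$, I would argue by a pole count. Since $n$ is negative, $n+1\le 0$ and the numerator contributes exactly one factor $\sim\epsilon^{-1}$; because $k<0$ and $n<k$ we have both $k+1\le 0$ and $n-k+1\le 0$, so each of the two denominator $\Gamma$'s also contributes a factor $\sim\epsilon^{-1}$. Hence the quotient behaves like $\epsilon^{\,2-1}=\epsilon$ as $\epsilon\to 0$, and the limit is $0$, as claimed. The same bookkeeping reproduces the other two cases: in each of them exactly one denominator pole is present, it cancels the numerator pole, and the surviving residues assemble --- up to the sign $(-1)^k$ or $(-1)^{n-k}$ --- into the binomial coefficients above.

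The one genuine obstacle is conceptual and lies entirely at the start: fixing the correct meaning of the Gamma quotient at the integer point. A naive diagonal perturbation $\binom{n+\epsilon}{k+\epsilon}$ leaves the argument $n-k+1$ unperturbed, so it retains an uncancelled pole and returns spuriously vanishing answers even when $k\ge 0$; the remedy is precisely to perturb all three arguments by the same $\epsilon$. Once this symmetric regularization is in place, the three-case split is just the count of how many of $n+1$, $k+1$, $n-k+1$ are simultaneously non-positive, and every remaining step is a routine residue calculation.
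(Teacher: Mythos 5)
The paper offers no proof of this statement at all---it is imported verbatim from Kronenburg's preprint [Kr] as background for Section 5---so your proposal can only be judged on its own terms, and on those terms it is essentially correct, and in fact it reconstructs the approach of the cited reference itself. Your symmetric regularization (shifting all three Gamma arguments by the same $\epsilon$) is precisely the device that makes the Gamma-quotient definition meaningful at negative integers, and your pole count gives the right trichotomy: exactly one cancelling denominator pole in each of the two nonzero cases, and two denominator poles against one numerator pole (hence limit $0$) when $n<k<0$. The residue arithmetic checks out: for $k\ge 0$ the surviving constants assemble to $(-1)^k\binom{-n+k-1}{k}$, and the $k\le n$ case then follows from the symmetry $k\mapsto n-k$, under which your regularized expression is manifestly invariant.

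The one place where your write-up is looser than it should be is the justification of the $k\ge 0$ case by polynomial continuity. The two-variable function $g(x,y)=\Gamma(x+1)/\bigl(\Gamma(y+1)\Gamma(x-y+1)\bigr)$ has \emph{no} limit at the singular point $(n,k)$: approaching along $y=k$ gives the polynomial value, approaching along the diagonal gives $0$, and other directions give intermediate multiples---which is exactly why a regularization must be chosen in the first place, as you yourself observe. Consequently ``agrees with the Gamma quotient wherever the latter is regular, hence equals its limit at $x=n$'' is not by itself a proof that your all-arguments-shifted prescription returns the polynomial value; that equality has to be computed. It is true, and your closing remark that ``the same bookkeeping reproduces the other two cases'' supplies the needed computation; even more cleanly, with equal shifts the functional equation gives exactly $\Gamma(n+1+\epsilon)/\Gamma(n-k+1+\epsilon)=(n+\epsilon)(n-1+\epsilon)\cdots(n-k+1+\epsilon)$, so the $k\ge 0$ case is immediate without any residue estimates. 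With that point tightened, your proof is complete and correct.
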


They satisfy the following properties of binomial coefficients \cite{Kr}.
\begin{eqnarray}
{x\choose y}&=&{x\choose x-y}\\
{x\choose y}{y\choose z}&=&{x\choose z}{x-z\choose y-z}\\
{x\choose y}&=&\frac{x}{y}{x-1\choose y-1}\qquad\textrm{(except $y=0$)}\\
{x\choose y}&=&{x-1\choose y}+{x-1\choose y-1}\qquad\textrm{(except $x=y=0$)}
\end{eqnarray}
In the proof of Lemma \lem{dyF} and Lemma \lem{FeG}, we only used the above properties of binomial coefficients along with Lemma \lem{Ca}, which itself is also proved using only the above properties. Therefore, we have
\begin{prop}\label{propo:FeG}
Let $k=2K+1\geq 5$ be an odd integer, for any integer $i$, we have
\begin{equation}
F_k^{(i)}(x,y)=G_k^{(i)}(x,y)
\end{equation}
\end{prop}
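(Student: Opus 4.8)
The plan is to sidestep the induction used for Lemma \lem{FeG} altogether and instead argue by a polynomiality-and-degree count in the variable $i$. The key observation is that, for a fixed odd weight $k = 2K+1$ and a fixed monomial $x^{2K-2s}y^{2s-1}$ with $1 \le s \le K-1$, the coefficient of that monomial in $F_k^{(i)}(x,y)$ is
\[
\frac{-2}{2s-1}\sum_{n=0}^{k-2s}\binom{i}{k-2s-n}\binom{n+2s-2}{n}B_n,
\]
while in $G_k^{(i)}(x,y)$ it is the same sum with $\binom{i}{k-2s-n}$ replaced by $-\binom{k-2-i}{k-2s-n}$. In every term the lower index $k-2s-n$ is a nonnegative integer, so both $\binom{i}{k-2s-n}$ and $\binom{k-2-i}{k-2s-n}$ are genuine polynomials in $i$, of degree $k-2s-n \le k-2s$. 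Hence the coefficient of $x^{2K-2s}y^{2s-1}$ in the difference $F_k^{(i)} - G_k^{(i)}$ is a polynomial in $i$ of degree at most $k-2s \le k-2$.

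Next I would feed in Lemma \lem{FeG}, which already supplies the equality $F_k^{(i)} = G_k^{(i)}$ for the $k-1$ integer values $i = 0, 1, \dots, k-2$. Each coefficient-polynomial identified above therefore vanishes at these $k-1$ distinct points, yet has degree at most $k-2 < k-1$. A nonzero polynomial has no more roots than its degree, so every one of these polynomials is identically zero. Assembling the monomials, $F_k^{(i)} - G_k^{(i)}$ is the zero polynomial in $x$ and $y$ for every value of $i$ at once; in particular the proposition holds for all integers $i$ (indeed for all real or complex $i$).

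The single point requiring genuine care --- and the step I would pin down first --- is the claim that the Gamma-function binomial $\binom{i}{j}$ agrees, for \emph{every} integer $i$ including the negative ones, with the polynomial $i(i-1)\cdots(i-j+1)/j!$ whenever $j \ge 0$; only then is the phrase ``polynomial in $i$'' faithful across the whole integer range on which the proposition is asserted. This is exactly what Krattenthaler's evaluation (the unnumbered theorem quoted above) provides: for $i<0$ it gives $\binom{i}{j} = (-1)^j\binom{-i+j-1}{j}$, and expanding the right-hand side returns the falling factorial. I expect this to be the only obstacle, and it is a much lighter task than the alternative of re-running the proof of Lemma \lem{FeG} verbatim, which would require re-establishing both Lemma \lem{dyF} and Carlitz's identity (Lemma \lem{Ca}) for negative arguments --- precisely the regime in which the finite truncation of the sums that made the original coefficient-matching work is no longer available.
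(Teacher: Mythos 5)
Your proof is correct, but it takes a genuinely different route from the paper's. The paper proves Proposition \propo{FeG} by re-inspecting its own proofs of Lemma \lem{dyF}, Lemma \lem{FeG}, and Carlitz's identity (Lemma \lem{Ca}), observing that they invoke only the four formal binomial-coefficient properties that, by Krattenthaler's theorem, survive the extension to negative arguments; in effect, the whole induction on $k$ is re-run verbatim in the extended setting. You instead fix $k$ and interpolate in $i$: since every lower index $k-2s-n$ appearing in $F_k^{(i)}$ and $G_k^{(i)}$ is a nonnegative integer, each coefficient of $x^{2K-2s}y^{2s-1}$ in $F_k^{(i)}-G_k^{(i)}$ is a polynomial in $i$ of degree at most $k-2s\le k-2$ (the top term coming from $n=0$), and Lemma \lem{FeG} already forces it to vanish at the $k-1$ points $i=0,1,\dots,k-2$, hence to vanish identically. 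You also correctly isolate the one verification your argument genuinely needs --- that Krattenthaler's evaluation $\binom{i}{j}=(-1)^j\binom{-i+j-1}{j}$ for $i<0$, $j\ge 0$ agrees with the falling-factorial polynomial $i(i-1)\cdots(i-j+1)/j!$ --- and that check does go through. Your approach buys robustness and a stronger conclusion: it never touches Pascal's rule, absorption, symmetry, or Carlitz's identity at negative arguments (precisely where those properties carry exceptional cases), and it shows the identity holds for all complex $i$, not merely integers. The paper's approach is shorter to state, but it rests on the claim, asserted rather than checked line by line, that every step of the earlier proofs remains valid in the extended setting; your interpolation argument replaces that claim with a self-contained degree count.
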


Comparing the coefficients on both sides, we get

\begin{cor}\label{cor:r3}
Let $k=2K+1\geq 5$ be an odd integer, for any integer $s$ satisfying $1\leq s\leq K$ and for any integer $i$, we have the following Bernoulli number identities
$$-\sum^{k-2s}_{n=0}{i\choose k-2s-n}{n+2s-2\choose n}B_n=\sum^{k-2s}_{n=0}{k-2-i\choose k-2s-n}{n+2s-2\choose n}B_n$$
\end{cor}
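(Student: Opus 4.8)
The plan is to read the corollary off from Proposition \propo{FeG} by comparing coefficients of monomials. Under the extended definition of this section, both $F_k^{(i)}(x,y)$ and $G_k^{(i)}(x,y)$ are linear combinations of the monomials $x^{2K-2s}y^{2s-1}$ with $1\leq s\leq K-1$, and these monomials are linearly independent. Since Proposition \propo{FeG} asserts $F_k^{(i)}(x,y)=G_k^{(i)}(x,y)$ for every integer $i$, the coefficient attached to each monomial on the two sides must agree. Extracting the coefficient of $x^{2K-2s}y^{2s-1}$ from each definition gives
$$\frac{-2}{2s-1}\sum^{k-2s}_{n=0}{i\choose k-2s-n}{n+2s-2\choose n}B_n=\frac{2}{2s-1}\sum^{k-2s}_{n=0}{k-2-i\choose k-2s-n}{n+2s-2\choose n}B_n,$$
and cancelling the common factor $2/(2s-1)$ yields the claimed identity for every $1\leq s\leq K-1$.

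The one point that requires separate attention is the endpoint $s=K$, which lies in the range asserted by the corollary but contributes no monomial to $F_k^{(i)}$ or $G_k^{(i)}$, since their defining sums stop at $s=K-1$. Here I would simply check the identity by hand: when $s=K$ one has $k-2s=1$, so the inner sums run only over $n\in\{0,1\}$. Using $B_0=1$ and $B_1=-\tfrac{1}{2}$, the left-hand side of the asserted identity equals $-\big(i-\tfrac{2K-1}{2}\big)$ and the right-hand side equals $(k-2-i)-\tfrac{2K-1}{2}$; since $k-2=2K-1$, both reduce to $\tfrac{2K-1}{2}-i$, so the identity holds there as well.

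There is no genuine obstacle in this argument, as the substance is already contained in Proposition \propo{FeG}. The only things to be careful about are the clean cancellation of the factor $2/(2s-1)$ and the bookkeeping at the boundary: the coefficient-comparison argument covers exactly $1\leq s\leq K-1$, so the value $s=K$ must be verified by the direct computation above rather than inferred from the polynomial identity.
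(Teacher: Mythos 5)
Your proposal is correct and follows the same route as the paper: the paper's entire proof consists of the phrase ``comparing the coefficients on both sides'' applied to Proposition \propo{FeG}, which is exactly the coefficient extraction you carry out, including the cancellation of the factor $\tfrac{2}{2s-1}$. You are in fact more careful than the paper: the stated range $1\leq s\leq K$ includes the boundary case $s=K$, which contributes no monomial $x^{2K-2s}y^{2s-1}$ to $F_k^{(i)}$ or $G_k^{(i)}$ and therefore cannot be read off from the proposition, and your direct check that both sides equal $\tfrac{2K-1}{2}-i$ there closes a small gap that the paper's one-line proof silently leaves open.
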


\section*{Acknowledgement}
This study was funded by NSF Grant No. DMS-1401122. The author would like to thank Romyar Sharifi for introducing him a project related to this area.

\bibliographystyle{amsplain}

\end{document}